\newtheorem{proposition}{Proposition}[section]
\newtheorem{lemma}[proposition]{Lemma}
\newtheorem{corollary}[proposition]{Corollary}
\newtheorem{theorem}[proposition]{Theorem}
\theoremstyle{definition}
\newtheorem{definition}[proposition]{Definition}
\newtheorem{example}[proposition]{Example}
\newtheorem{examples}[proposition]{Examples}
\newtheorem{remark}[proposition]{Remark}
\newcommand{\thlabel}[1]{\label{th:#1}}
\newcommand{\thref}[1]{Theorem~\ref{th:#1}}
\newcommand{\selabel}[1]{\label{se:#1}}
\newcommand{\seref}[1]{Section~\ref{se:#1}}
\newcommand{\lelabel}[1]{\label{le:#1}}
\newcommand{\leref}[1]{Lemma~\ref{le:#1}}
\newcommand{\prlabel}[1]{\label{pr:#1}}
\newcommand{\prref}[1]{Proposition~\ref{pr:#1}}
\newcommand{\colabel}[1]{\label{co:#1}}
\newcommand{\coref}[1]{Corollary~\ref{co:#1}}
\newcommand{\relabel}[1]{\label{re:#1}}
\newcommand{\reref}[1]{Remark~\ref{re:#1}}
\newcommand{\exlabel}[1]{\label{ex:#1}}
\newcommand{\exref}[1]{Example~\ref{ex:#1}}
\newcommand{\delabel}[1]{\label{de:#1}}
\newcommand{\deref}[1]{Definition~\ref{de:#1}}
\newcommand{\eqlabel}[1]{\label{eq:#1}}
\newcommand{\equref}[1]{(\ref{eq:#1})}
\def\ot{\otimes}
\newcommand{\Cc}{\mathcal{C}}
\def\*C{{}^*\hspace*{-1pt}{\Cc}}
\def\text#1{{\rm {\rm #1}}}
\begin{document}

\title[Extending structures for Lie algebras]
{Extending structures for Lie algebras}

\author{A. L. Agore}
\address{Faculty of Engineering, Vrije Universiteit Brussel, Pleinlaan 2, B-1050 Brussels, Belgium}
\email{ana.agore@vub.ac.be and ana.agore@gmail.com}

\author{G. Militaru}
\address{Faculty of Mathematics and Computer Science, University of Bucharest, Str.
Academiei 14, RO-010014 Bucharest 1, Romania}
\email{gigel.militaru@fmi.unibuc.ro and gigel.militaru@gmail.com}
\subjclass[2010]{16T10, 16T05, 16S40}

\thanks{A.L. Agore is research fellow ''Aspirant'' of FWO-Vlaanderen.
This work was supported by a grant of the Romanian National
Authority for Scientific Research, CNCS-UEFISCDI, grant no.
88/05.10.2011.}

\subjclass[2010]{17B05, 17B55, 17B56} \keywords{The extension and
the factorization problem, unified products, relative
(non-abelian) cohomology for Lie algebras}


\begin{abstract}
Let $\mathfrak{g}$ be a Lie algebra, $E$ a vector space containing
$\mathfrak{g}$ as a subspace. The paper is devoted to the
\emph{extending structures problem} which asks for the
classification of all Lie algebra structures on $E$ such that
$\mathfrak{g}$ is a Lie subalgebra of $E$. A general product,
called the unified product, is introduced as a tool for our
approach. Let $V$ be a complement of $\mathfrak{g}$ in $E$: the
unified product $\mathfrak{g} \,\natural \, V$ is associated to a
system $(\triangleleft, \, \triangleright, \, f, \{-, \, -\})$
consisting of two actions $\triangleleft$ and $\triangleright$, a
generalized cocycle $f$ and a twisted Jacobi bracket $\{-, \, -\}$
on $V$. There exists a Lie algebra structure $[-,-]$ on $E$
containing $\mathfrak{g}$ as a Lie subalgebra if and only if there
exists an isomorphism of Lie algebras $(E, [-,-]) \cong
\mathfrak{g} \,\natural \, V$. All such Lie algebra structures on
$E$ are classified by two cohomological type objects which are
explicitly constructed. The first one ${\mathcal
H}^{2}_{\mathfrak{g}} (V, \mathfrak{g})$ will classify all Lie
algebra structures on $E$ up to an isomorphism that stabilizes
$\mathfrak{g}$ while the second object ${\mathcal H}^{2} (V,
\mathfrak{g})$ provides the classification from the view point of
the extension problem. Several examples that compute both
classifying objects ${\mathcal H}^{2}_{\mathfrak{g}} (V,
\mathfrak{g})$ and ${\mathcal H}^{2} (V, \mathfrak{g})$ are worked
out in detail in the case of flag extending structures.
\end{abstract}

\maketitle

\section*{Introduction}
Lie algebras are studied in different fields such as differential
geometry, classical/quantum mechanics or the theory of particle
physics. In differential geometry, Lie algebras arise naturally on
the tangent space of symmetry (Lie) groups on manifolds. In
Hamiltonian mechanics the phase space is an example of a Lie
algebra while in quantum mechanics Heisenberg postulated the
existence of an infinite-dimensional Lie algebra of operators: the
theory of quantum mechanics follows more or less from properties
of Lie algebras. In the theory of particle physics Lie algebras
play a key role. For instance, bosonic string theory uses a Lie
algebra to formulate operators and the state space. Beyond the
remarkable applications in the above mentioned fields, Lie
algebras are objects of study in their own right. In this context
a natural question arises (throughout this paper, by 'an
isomorphism of Lie algebras $\varphi : E \to E$ that stabilizes
$\mathfrak{g}$' we mean an isomorphism of Lie algebras that acts
as the identity on the subspace $\mathfrak{g}$):

\textbf{Extending structures problem.} \textit{Let $\mathfrak{g}$
be a Lie algebra and $E$ a vector space containing $\mathfrak{g}$
as a subspace. Describe and classify up to an isomorphism of Lie
algebras that stabilizes $\mathfrak{g}$ the set of all Lie algebra
structures $[-, -]$ that can be defined on $E$ such that
$\mathfrak{g}$ is a Lie subalgebra of $(E, [-, -])$.}

We formulated and studied the same problem at the level of groups
in \cite{am-2010} and in a more general setting for Hopf algebras
in \cite{am-2011}. Even if the statement of the problem is
elementary, the problem turns out to be a difficult one. For
instance, if $\mathfrak{g} = \{0\}$ then the ES problem asks for
the classification of all Lie algebra structures on a given vector
space $E$, which is of course a wild problem. For this reason,
from now on we will assume that $\mathfrak{g} \neq \{0\}$.
Although the ES problem is very difficult, we can provide a
detailed answer to it in the case of what we call \emph{flag
extending structures} of $\mathfrak{g}$ to $E$ in the sense of
\deref{flagex}. To start with, we will explain what we mean by an
answer to the classification part of the ES problem. Having in
mind that we want to\emph{ extend} the Lie algebra structure on
$\mathfrak{g}$ to a bigger vector space, by classification we will
always mean classification up to an isomorphism of Lie algebras
$\varphi : E \to E$ that stabilizes $\mathfrak{g}$, i.e. $\varphi
(g) = g$, for all $g \in \mathfrak{g}$. Therefore, the problem
comes down to actually constructing the classifying object from
this first point of view: it will be a relative cohomological
'group'. On the other hand, as we shall explain below, the ES
problem generalizes the extension problem. Thus, we can also ask
for the classification from this point of view, i.e. up to an
isomorphism of Lie algebras that simultaneously stabilizes
$\mathfrak{g}$ and co-stabilizes $V$, its complement in $E$. This
will be the second classifying object which will generalize the
classical second cohomology group $H^{2} (V, \mathfrak{g})$.

The ES problem generalizes and unifies two famous problems in the
theory of Lie algebras: the \emph{extension problem} which goes
back to Chevalley and Eilenberg \cite{CE} and the
\emph{factorization problem} who's roots descend to the classical
results of Levi and Malcev \cite[Theorem 5]{bour}. We will explain
this briefly. Let $\mathfrak{g}$ and $\mathfrak{h}$ be two given
Lie algebras. The extension problem asks for the classification of
all Lie algebras $\mathfrak{E}$ which contain $\mathfrak{g}$ as an
ideal such that $\mathfrak{E}/\mathfrak{g} \cong \mathfrak{h}$.
Equivalently, the extension problem asks for the classification of
all Lie algebras $\mathfrak{E}$ that fit into an exact sequence of
Lie algebras
\begin{eqnarray} \eqlabel{extencros0}
\xymatrix{ 0 \ar[r] & \mathfrak{g} \ar[r]^{i} & \mathfrak{E}
\ar[r]^{\pi} & \mathfrak{h} \ar[r] & 0 }
\end{eqnarray}
Now, if in the ES problem we replace the condition
''$\mathfrak{g}$ is a Lie subalgebra of $(E, [-, -])$'' by a more
restrictive one, namely ''$\mathfrak{g}$ is an ideal of $E$'',
then what we obtain is in fact a reformulation of the extension
problem: any Lie algebra structure on $E$ containing
$\mathfrak{g}$ as an ideal is of course an extension of
$\mathfrak{g}$ through the Lie algebra $\mathfrak{h} :=
E/\mathfrak{g}$. In this case, let  $\pi : E \to \mathfrak{h}$ be
the canonical projection and $s : \mathfrak{h} \to E$ a linear
section of $\pi$, i.e. $\pi \circ s = {\rm Id}_{\mathfrak{h}}$. We
define the action $\triangleright $ and the cocycle $f$ by the
usual formulas:
\begin{eqnarray}
\triangleright : \mathfrak{h} \times \mathfrak{g} \to
\mathfrak{g}, \qquad x \triangleright g &:=& [s(x), \, g]  \eqlabel{cro1}\\
f: \mathfrak{h} \times \mathfrak{h} \to \mathfrak{g}, \qquad f(x,
y) &:=& [s(x), \, s(y)] - s \bigl( [x, \, y] \bigl) \eqlabel{cro2}
\end{eqnarray}
for all $x$, $y \in \mathfrak{h}$ and $g \in \mathfrak{g}$. In
geometrical language the action $\triangleright$ is called
\emph{connection}, while the cocycle $f$ is called
\emph{curvature}: for more details about the importance of the
extension problem in differential geometry we refer to
\cite[Section 4]{AMR2} and \cite{le}. Then the system
$(\mathfrak{g}, \, \mathfrak{h}, \, \triangleright, \, f )$ is a
crossed system of Lie algebras and the map
$$
\psi : \mathfrak{g} \#_{\triangleleft}^f \, \mathfrak{h} \to E,
\qquad \psi (g, x) := g + s(x)
$$
is an isomorphism of Lie algebras (see \coref{croslieide} for
details). In this classical reconstruction of a Lie algebra $E$
from an ideal and the corresponding quotient, the fact that
$\mathfrak{g}$ is an ideal of $E$ plays a crucial role from the
very beginning: namely it is the main ingredient in proving that
the action $\triangleright$ and the cocycle $f$ take values in
$\mathfrak{g} = {\rm Ker}(\pi)$. If we drop the assumption that
$\mathfrak{g}$ is an ideal of $E$ and we only ask for
$\mathfrak{g}$ to be a Lie subalgebra of $E$, as we formulated the
ES problem, then the above construction can not be performed
anymore and we have to come up with a new method of reconstructing
the Lie algebra $E$ from a given Lie subalgebra and another set of
data. This is what we do in \seref{unifiedprod}. For further
reference on the extension problem for Lie algebras, in the
abelian or non-abelian case, we refer to \cite{AMR1}, \cite{AMR2},
\cite{CE}, \cite{fa}, \cite{hu}, \cite{zu}.

The factorization problem is the dual of the extension problem. It
consists of describing and classifying up to an isomorphism all
Lie algebras $E$ that factorize through two given Lie algebras
$\mathfrak{g}$ and $\mathfrak{h}$: i.e. $E$ contains
$\mathfrak{g}$ and $\mathfrak{h}$ as Lie subalgabras such that $E
= \mathfrak{g} + \mathfrak{h}$ and $ \mathfrak{g} \cap
\mathfrak{h} = \{0\}$. The factorization problem is also a special
case of the ES problem, if we impose the following additional
assumption: we consider the complement $V$ of $\mathfrak{g}$ in
$E$ to be also a Lie subalgebra of $E$ isomorphic to
$\mathfrak{h}$. Dual to the extension problem, it was
independently proven in \cite{majid} and \cite{LW}, that a Lie
algebra $E$ factorizes through $\mathfrak{g}$ and $\mathfrak{h}$
if and only if $E \cong \mathfrak{g} \bowtie \mathfrak{h}$, where
$\mathfrak{g} \bowtie \mathfrak{h}$ is the bicrossed product
associated to a matched pair of Lie algebras $(\mathfrak{g},
\mathfrak{h}, \triangleleft, \triangleright)$. The details are
given in \seref{cazurispeciale}.

The paper is organized as follows: in \seref{unifiedprod} we will
perform the abstract construction of the \emph{unified product}
$\mathfrak{g} \,\natural \, V$: it is associated to a Lie algebra
$\mathfrak{g}$, a vector space $V$ and a system of data
$\Omega(\mathfrak{g}, V) = \bigl(\triangleleft, \, \triangleright,
\, f, \{-, \, -\} \bigl)$ called an extending datum of
$\mathfrak{g}$ through $V$. \thref{1} establishes the set of
axioms that has to be satisfied by $\Omega(\mathfrak{g}, V)$ such
that $\mathfrak{g} \,\natural \, V$ with a given canonical bracket
becomes a Lie algebra, i.e. is a unified product. In this case,
$\Omega(\mathfrak{g}, V) = \bigl(\triangleleft, \, \triangleright,
\, f, \{-, \, -\} \bigl)$ will be called a \emph{Lie extending
structure} of $\mathfrak{g}$ through $V$. Now let $\mathfrak{g}$
be a Lie algebra, $E$ a vector space containing $\mathfrak{g}$ as
a subspace and $V$ a given complement of $\mathfrak{g}$ in $E$.
\thref{classif} provides the answer to the description part of the
ES problem: there exists a Lie algebra structure $[-,-]$ on $E$
such that $\mathfrak{g}$ is a subalgebra of $(E, [-,-])$ if and
only if there exists an isomorphism of Lie algebras $(E, [-,-])
\cong \mathfrak{g} \,\natural \, V$, for some Lie extending
structure $\Omega(\mathfrak{g}, V) = \bigl(\triangleleft, \,
\triangleright, \, f, \{-, \, -\} \bigl)$ of $\mathfrak{g}$
through $V$. The answer to the classification part of the ES
problem is given in \thref{main1}: we will construct explicitly a
relative cohomology group, denoted by ${\mathcal
H}^{2}_{\mathfrak{g}} \, (V, \, \mathfrak{g} )$, which will be the
classifying object of all extending structures of the Lie algebra
$\mathfrak{g}$ to $E$ - the classification is given up to an
isomorphism of Lie algebras which stabilizes $\mathfrak{g}$.
Moreover, we also indicate the bijection between the elements of
${\mathcal H}^{2}_{\mathfrak{g}} \, (V, \, \mathfrak{g} )$ and the
isomorphism classes of all extending structures of $\mathfrak{g}$.
The construction of the second classifying object, denoted by
${\mathcal H}^{2} \, (V, \, \mathfrak{g} )$, is performed in
\reref{aldoileaob}: it parameterizes all extending structures of
$\mathfrak{g}$ to a Lie algebra on $E$ up to an isomorphism which
simultaneously stabilizes $\mathfrak{g}$ and co-stabilizes $V$ -
i.e. this classification is given from the point of view of the
extension problem. There exists a canonical projection ${\mathcal
H}^{2} \, (V, \, \mathfrak{g} ) \twoheadrightarrow {\mathcal
H}^{2}_{\mathfrak{g}} \, (V, \, \mathfrak{g})$ between these two
classifying objects. We point out that ${\mathcal H}^{2} \, (V, \,
\mathfrak{g} )$ generalizes the classical cohomology group ${\rm
H}^{2} \, (V, \, \mathfrak{g} )$: the latter is obtained as a
special case of ${\mathcal H}^{2} \, (V, \, \mathfrak{g} )$ if we
let the right action $\triangleleft$ to be the trivial one and the
extending structures of $\mathfrak{g}$ to be 'abelian' that is, if
we ask that the Lie algebra $\mathfrak{g}$ is contained in the
center of the unified products $\mathfrak{g} \,\natural \, V$. One
of the special cases that we introduce in \exref{twistedproduct}
is called \emph{twisted product}, the terminology being borrowed
from Hopf algebra theory. The two Lie algebras $\mathfrak{g}$ and
$V$ involved in the construction of the twisted product are
connected by a classical $2$-cocycle $f: V \times V \to
\mathfrak{g}$ and plays a key role in the classification of all
$6$-dimensional nilpotent Lie algebras \cite{gra}. Apart from the
twisted product, we show in \seref{cazurispeciale} that both the
classical crossed product and bicrossed product of Lie algebras
appear as special cases of the unified product.

\thref{main1} offers the theoretical answer to the extending
structures problem. The challenge we are left to deal with is a
purely computational one: for a given Lie algebra $\mathfrak{g}$
that is a subspace in a vector space $E$ with a given complement
$V$ we have to compute explicitly the classifying object
${\mathcal H}^{2}_{\mathfrak{g}} \, (V, \, \mathfrak{g} )$ and
then to list the set of types of all Lie algebra structures on $E$
which extend the Lie algebra structure on $\mathfrak{g}$. This is
highly nontrivial considering that the construction of ${\mathcal
H}^{2}_{\mathfrak{g}} \, (V, \, \mathfrak{g} )$ is very laborious.
In \seref{exemple} we shall identify a way of computing ${\mathcal
H}^{2}_{\mathfrak{g}} \, (V, \, \mathfrak{g} )$ for the case when
the complement $V$ is finite dimensional: namely for those that
are \emph{flag extending structures} of $\mathfrak{g}$ to $E$ in
the sense of \deref{flagex}. All flag extending structures of
$\mathfrak{g}$ to $E$ can be completely described by a recursive
reasoning where the key step is the case when $\mathfrak{g}$ has
codimension $1$ as a subspace of $E$. This case is completely
solved in \thref{clasdim1} where ${\mathcal H}^{2}_{\mathfrak{g}}
(V, \mathfrak{g} )$ and ${\mathcal H}^{2} \, (V, \mathfrak{g} )$
are completely described: both objects are quotient pointed sets
of the set ${\rm TwDer} (\mathfrak{g})$ of all twisted derivations
of $\mathfrak{g}$ introduced in \deref{lambdaderivariii}. The set
${\rm TwDer} (\mathfrak{g})$ contains the usual space of
derivations ${\rm Der} (\mathfrak{g})$ via the canonical embedding
which is an isomorphism in the case when $\mathfrak{g}$ is a
perfect Lie algebra. Finally, two explicit examples are given in
\exref{ultimulexperfect} and \exref{cazulneperfect}: in the first
case all extending structures of a $5$-dimensional perfect Lie
algebra to a space of dimension $6$ are classified while in the
second one we list all types of extending structures of the
non-perfect Lie algebra $\textsf{gl}(2, k)$ to a space of
dimension $5$.

\section{Preliminaries}\selabel{prel}
Throughout this paper $k$ will be a field. All vector spaces, Lie
algebras, linear or bilinear maps are over $k$. A map $f: V \to W$
between two vector spaces is called the \emph{trivial map} if $f
(v) = 0$, for all $v\in V$. Let $\mathfrak{g} \leq E$ be a
subspace in a vector space $E$; a subspace $V$ of $E$ such that $E
= \mathfrak{g} + V$ and $V \cap \mathfrak{g} = 0$ is called a
complement of $\mathfrak{g}$ in $E$. Such a complement is unique
up to an isomorphism and its dimension is called the codimension
of $\mathfrak{g}$ in $E$. We recall briefly the basic concepts
related to Lie algebras; for all unexplained notations or
definitions we refer the reader to \cite{bour}, \cite{EW} or
\cite{H}. A Lie algebra is a vector space $\mathfrak{g}$, together
with a bilinear map $[- , \, -] : \mathfrak{g} \times \mathfrak{g}
\to \mathfrak{g}$ called bracket satisfying the following two
properties:
$$
[g, \, g] = 0, \qquad [g, \, [h, \,l] ] + [h, \, [l, \, g] ] + [l,
\, [g, \, h] ] = 0
$$
for all $g$, $h$, $l\in \mathfrak{g}$. The second condition is
called the Jacobi identity. Let $\mathfrak{g}$ be a Lie algebra
and $\mathfrak{g}' := [ \mathfrak{g} , \mathfrak{g}]$ be the
derived algebra of $\mathfrak{g}$; $\mathfrak{g}$ is called
perfect if $\mathfrak{g}' = \mathfrak{g}$ and abelian if
$\mathfrak{g}' = 0$. Representations of a Lie algebra
$\mathfrak{g}$ will be viewed as modules over $\mathfrak{g}$;
moreover, we shall work with both concepts of right and left
$\mathfrak{g}$-modules. Explicitly, a \emph{right
$\mathfrak{g}$-module} is a vector space $V$ together with a
bilinear map $ \triangleleft : V \times \mathfrak{g} \to V$,
called a right action of $\mathfrak{g}$ on $V$, satisfying the
following compatibility
\begin{equation}\eqlabel{moduldrept}
x \triangleleft [g, \, h] = (x \triangleleft g) \triangleleft h -
(x \triangleleft h) \triangleleft g
\end{equation}
for all $x\in V$ and $g$, $h \in \mathfrak{g}$. A \emph{left
$\mathfrak{g}$-module} is a vector space $V$ together with a
bilinear map $ \triangleright : \mathfrak{g} \times V \to V$,
called a left action of $\mathfrak{g}$ on $V$ such that:
\begin{equation}\eqlabel{modulstring}
[g, \, h] \triangleright x = g \triangleright (h \triangleright x)
- h \triangleright (g \triangleright x)
\end{equation}
for all $g$, $h \in \mathfrak{g}$ and $x\in V$. Any right
$\mathfrak{g}$-module is a left $\mathfrak{g}$-module via $g
\triangleright x := - x \triangleleft g$ and viceversa, that is
the category of right $\mathfrak{g}$-modules is isomorphic to the
category of left $\mathfrak{g}$-modules and both of them are
isomorphic to the category of representations of $\mathfrak{g}$.
${\rm Der} (\mathfrak{g})$ denotes the Lie algebra of all
derivations of $\mathfrak{g}$, that is all linear maps $D:
\mathfrak{g} \to \mathfrak{g}$ such that
$$
D ([g, \, h]) = [D(g), \, h] + [g, \, D(h)]
$$
for all $g$, $h\in \mathfrak{g}$. ${\rm Der} (\mathfrak{g})$ is a
Lie algebra with the bracket $[D_1, \, D_2] := D_1\circ D_2 - D_2
\circ D_1$ and the map
$$
{\rm ad} : \mathfrak{g} \to {\rm Der} (\mathfrak{g}), \quad {\rm
ad} (g) := [g, \, -] : \mathfrak{g} \to \mathfrak{g}, \quad h
\mapsto [g, \, h]
$$
is called the adjoint representation of $\mathfrak{g}$. Then,
${\rm Ker} ({\rm ad}) = Z (\mathfrak{g})$, the center of
$\mathfrak{g}$, and ${\rm Im} ({\rm ad})$ is called the space of
inner derivation of $\mathfrak{g}$ and will be denoted by ${\rm
Inn}(\mathfrak{g})$. ${\rm Inn}(\mathfrak{g} )$ is a Lie ideal in
${\rm Der}(\mathfrak{g})$ and
$$
{\rm Out} (\mathfrak{g}) := {\rm Der} (\mathfrak{g}) / {\rm
Inn}(\mathfrak{g} )
$$
is the Lie algebra of outer derivations of $\mathfrak{g}$. If
$\mathfrak{g}$ is semisimple, then $\mathfrak{g}$  is perfect,
${\rm Inn}(\mathfrak{g}) = {\rm Der} (\mathfrak{g})$ and
$Z(\mathfrak{g}) = 0$ (\cite{H}).

In order to answer the classification part of the extending
structures problem we need to introduce the following:

\begin{definition} \delabel{echivextedn}
Let $\mathfrak{g}$ be a Lie algebra, $E$ a vector space such that
$\mathfrak{g}$ is a subspace of $E$ and $V$ a complement of
$\mathfrak{g}$ in $E$. For a linear map $\varphi: E \to E$ we
consider the diagram:
\begin{eqnarray} \eqlabel{diagrama}
\xymatrix {& \mathfrak{g} \ar[r]^{i} \ar[d]_{Id} & {E}
\ar[r]^{\pi} \ar[d]^{\varphi} & V \ar[d]^{Id}\\
& \mathfrak{g} \ar[r]^{i} & {E}\ar[r]^{\pi } & V}
\end{eqnarray}
where $\pi : E \to V$ is the canonical projection of $E =
\mathfrak{g} + V$ on $V$ and $i: \mathfrak{g} \to E$ is the
inclusion map. We say that $\varphi: E \to E$ \emph{stabilizes}
$\mathfrak{g}$ (resp. \emph{co-stabilizes} $V$) if the left square
(resp. the right square) of the diagram \equref{diagrama} is
commutative.

Let $\{-, \, -\}$ and $\{-, \, -\}'$ be two Lie algebra structures
on $E$ both containing $\mathfrak{g}$ as a Lie subalgebra. $\{-,
\, -\}$ and $\{-, \, -\}'$ are called \emph{equivalent}, and we
denote this by $(E, \{-, \, -\}) \equiv (E, \{-, \, -\}')$, if
there exists a Lie algebra isomorphism $\varphi: (E, \{-, \, -\})
\to (E, \{-, \, -\}')$ which stabilizes $\mathfrak{g}$.

$\{-, \, -\}$ and $\{-, \, -\}'$ are called \emph{cohomologous},
and we denote this by $(E, \{-, \, -\}) \approx (E, \{-, \,
-\}')$, if there exists a Lie algebra isomorphism $\varphi: (E,
\{-, \, -\}) \to (E, \{-, \, -\}')$ which stabilizes
$\mathfrak{g}$ and co-stabilizes $V$, i.e. the diagram
\equref{diagrama} is commutative.
\end{definition}

$\equiv$ and $\approx$ are both equivalence relations on the set
of all Lie algebras structures on $E$ containing $\mathfrak{g}$ as
a Lie subalgebra and we denote by ${\rm Extd} \, (E,
\mathfrak{g})$ (resp. ${\rm Extd}' \, (E, \mathfrak{g})$) the set
of all equivalence classes via $\equiv$ (resp. $\approx$). Thus,
${\rm Extd} \, (E, \mathfrak{g})$ is the classifying object of the
extending structures problem: by explicitly computing ${\rm Extd}
\, (E, \mathfrak{g})$ we obtain a parametrization of the set of
all isomorphism classes of Lie algebra structures on $E$ that
stabilizes $\mathfrak{g}$. ${\rm Extd}' \, (E, \mathfrak{g})$
gives a classification of the ES problem from the point of view of
the extension problem. Any two cohomologous brackets on $E$ are of
course equivalent, hence there exists a canonical projection
$$
{\rm Extd}' \,  (E, \mathfrak{g}) \twoheadrightarrow {\rm Extd} \,
(E, \mathfrak{g})
$$
The classification part of the extending structures problem will
be solved by computing explicitly both classifying objects.
Borrowing the terminology from Lie algebra cohomology, we will see
that ${\rm Extd}' \, (E, \mathfrak{g})$ is parameterized by a
cohomological object denoted by ${\mathcal H}^{2} \, (V, \,
\mathfrak{g})$, which will be explicitly constructed and which
generalizes the classical second cohomology group for Lie algebras
\cite{CE}, while ${\rm Extd} \, (E, \mathfrak{g})$ will be
parameterized by a relative cohomological object, denoted by
${\mathcal H}^{2}_{\mathfrak{g}} \, (V, \, \mathfrak{g} )$ which
turns out to be a quotient of ${\mathcal H}^{2} \, (V, \,
\mathfrak{g})$.

\section{Unified products for Lie algebras}\selabel{unifiedprod}

\begin{definition}\delabel{exdatum}
Let $\mathfrak{g}$ be a Lie algebra and $V$ a vector space. An
\textit{extending datum of $\mathfrak{g}$ through $V$} is a system
$\Omega(\mathfrak{g}, V) = \bigl(\triangleleft, \, \triangleright,
\, f, \{-, \, -\} \bigl)$ consisting of four bilinear maps
$$
\triangleleft : V \times \mathfrak{g} \to V, \quad \triangleright
: V \times \mathfrak{g} \to \mathfrak{g}, \quad f: V\times V \to
\mathfrak{g}, \quad \{-, \, -\} : V\times V \to V
$$
Let $\Omega(\mathfrak{g}, V) = \bigl(\triangleleft, \,
\triangleright, \, f, \{-, \, -\} \bigl)$ be an extending datum.
We denote by $ \mathfrak{g} \, \,\natural \,_{\Omega(\mathfrak{g},
V)} V = \mathfrak{g} \, \,\natural \, V$ the vector space
$\mathfrak{g} \, \times V$ with the bilinear map $[ -, \, -] :
(\mathfrak{g} \times V) \times (\mathfrak{g} \times V) \to
\mathfrak{g} \times V$ defined by:
\begin{equation}\eqlabel{brackunif}
[(g, x), \, (h, y)] := \bigl( [g, \, h] + x \triangleright h -
y\triangleright g + f(x, y), \,\, \{x, \, y \} + x\triangleleft h
- y\triangleleft g \bigl)
\end{equation}
for all $g$, $h \in \mathfrak{g}$ and $x$, $y \in V$. The object
$\mathfrak{g} \,\natural \, V$ is called the \textit{unified
product} of $\mathfrak{g}$ and $\Omega(\mathfrak{g}, V)$ if it is
a Lie algebra with the bracket given by \equref{brackunif}. In
this case the extending datum $\Omega(\mathfrak{g}, V) =
\bigl(\triangleleft, \, \triangleright, \, f, \{-, \, -\} \bigl)$
is called a \textit{Lie extending structure} of $\mathfrak{g}$
through $V$. The maps $\triangleleft$ and $\triangleright$ are
called the \textit{actions} of $\Omega(\mathfrak{g}, V)$ and $f$
is called the \textit{cocycle} of $\Omega(\mathfrak{g}, V)$.
\end{definition}

The extending datum $\Omega(\mathfrak{g}, V) =
\bigl(\triangleleft, \, \triangleright, \, f, \{-, \, -\} \bigl)$,
for which $(\triangleleft, \, \triangleright, \, f, \{-, \, -\}
\bigl)$ are all the trivial maps is an example of a Lie extending
structure, called the \emph{trivial extending structure} of
$\mathfrak{g}$ through $V$. Let $\Omega(\mathfrak{g}, V)$ be an
extending datum of $\mathfrak{g}$ through $V$. Then, the following
relations, very useful in computations, hold in $\mathfrak{g}
\,\natural \, V$:
\begin{eqnarray}
[(g, 0), \, (h, y)] &=& \bigl([g, \, h] - y \triangleright
g, \, - y \triangleleft g \bigl) \eqlabel{001}\\
\left[(0, x), \, (h, y)\right] &=& \bigl( x \triangleright h +
f(x, y), \, x \triangleleft h + \{x, \, y\} \bigl) \eqlabel{002}
\end{eqnarray}
for all $g$, $h \in \mathfrak{g}$ and $x$, $y \in V$.

\begin{theorem}\thlabel{1}
Let $\mathfrak{g}$ be a Lie algebra, $V$ a $k$-vector space and
$\Omega(\mathfrak{g}, V)$ an extending datum of $\mathfrak{g}$ by
$V$. The following statements are equivalent:

$(1)$ $\mathfrak{g} \,\natural \, V$ is a unified product;

$(2)$ The following compatibilities hold for any $g$, $h \in
\mathfrak{g}$, $x$, $y$, $z \in V$:
\begin{enumerate}
\item[(LE1)] $f(x,\, x) = 0$, \qquad $\{x,\, x \} = 0$; \\
\item[(LE2)] $(V, \, \triangleleft)$ is a right $\mathfrak{g}$-module;\\
\item[(LE3)] $x \triangleright [g, \, h] = [x \triangleright g, \,
h] + [g, \, x \triangleright h] + (x \triangleleft g)
\triangleright h - (x \triangleleft h) \triangleright g$;\\
\item[(LE4)] $\{x,\, y \} \triangleleft g = \{x,\, y \triangleleft
g\} + \{x \triangleleft g,\, y \} + x \triangleleft (y
\triangleright g)
- y \triangleleft (x \triangleright g)$;\\
\item[(LE5)] $\{x,\, y \} \triangleright g = x \triangleright (y
\triangleright g) - y \triangleright (x \triangleright g) + [g, \,
f(x,\, y)] + f(x, y \triangleleft g) + f(x \triangleleft g, y)$;\\
\item[(LE6)] $f\bigl(x, \{y,\, z \}\bigl) + f\bigl(y, \{z,\, x
\}\bigl) + f\bigl(z, \{x,\, y \}\bigl) + x \triangleright f(y,  z)
+ y \triangleright f(z,  x) + z \triangleright f(x,
y) = 0$;\\
\item[(LE7)] $\{x, \, \{y, \,z\}\} + \{y, \, \{z, \,x\}\} + \{z,
\, \{x, \,y\}\} + x \triangleleft f(y, z) + y \triangleleft f(z,
x) + z \triangleleft f(x, y) = 0$.
\end{enumerate}
\end{theorem}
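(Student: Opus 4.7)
The plan is to verify that the bracket $[-,-]$ defined by \equref{brackunif} satisfies the two defining axioms of a Lie algebra, namely the antisymmetry condition $[z,z]=0$ and the Jacobi identity, and to match each axiom against the corresponding conditions among (LE1)--(LE7). Bilinearity of \equref{brackunif} is automatic from the bilinearity of $\triangleleft$, $\triangleright$, $f$ and $\{-,-\}$, so only these two identities require analysis.

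First I would specialize to $z=(g,x)$ and compute
\[
[(g,x),(g,x)] = \bigl([g,g] + x\triangleright g - x\triangleright g + f(x,x),\; \{x,x\} + x\triangleleft g - x\triangleleft g\bigr) = \bigl(f(x,x),\, \{x,x\}\bigr).
\]
Hence $[z,z]=0$ for every $z\in \mathfrak{g}\,\natural\, V$ precisely when $f(x,x)=0$ and $\{x,x\}=0$ for every $x\in V$, which is (LE1). Polarizing (LE1) supplies the antisymmetry of $f$ and of $\{-,-\}$, which I will use freely when rearranging the Jacobi identities.

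The heart of the proof is the Jacobi identity. Since $\mathfrak{g}\,\natural\, V = (\mathfrak{g}\times 0)\oplus (0\times V)$ as a vector space and the bracket is trilinear, it suffices to check Jacobi on triples drawn from the two summands. The simplified formulas \equref{001}--\equref{002} are the right tool here: in each case the sum of the three cyclic double brackets produces an element of $\mathfrak{g}\times V$ whose two components decouple into one equation in $\mathfrak{g}$ and one in $V$. I expect the matching to be as follows: three arguments in $\mathfrak{g}$ reduces to the Jacobi identity in $\mathfrak{g}$ itself and yields no new condition; two arguments in $\mathfrak{g}$ and one in $V$ gives (LE3) from the $\mathfrak{g}$-component and (LE2) from the $V$-component; one argument in $\mathfrak{g}$ and two in $V$ gives (LE5) and (LE4) respectively; three arguments in $V$ gives (LE6) and (LE7). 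In each mixed case the raw cyclic sum is rewritten into the symmetric form displayed in the theorem by invoking the antisymmetry of $f$ and $\{-,-\}$. Both directions then follow at once: if (LE1)--(LE7) hold, all four cases of Jacobi vanish so the bracket is Lie; conversely, if the bracket is Lie, then (LE1) is the antisymmetry condition and each of (LE2)--(LE7) is the independent $\mathfrak{g}$- or $V$-component of one of the three nontrivial Jacobi cases.

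The main obstacle is purely computational: in the case with one element from $\mathfrak{g}$ and two from $V$, the expansion of $[[(g,0),(0,x)],(0,y)]$ together with its two cyclic partners produces a large number of terms, and one must recognize the result, after antisymmetrizing in the appropriate arguments, as precisely (LE4) and (LE5). No conceptual difficulty arises beyond careful bookkeeping of signs and of which variable is acted upon by which of the two actions.
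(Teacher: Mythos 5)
Your proposal is correct and follows essentially the same route as the paper's own proof: verify that $[(g,x),(g,x)]=(f(x,x),\{x,x\})$ gives (LE1), then use bilinearity and cyclic invariance to reduce the Jacobi identity to the four cases of triples drawn from $\mathfrak{g}\times 0$ and $0\times V$, whose $\mathfrak{g}$- and $V$-components yield exactly (LE2)--(LE7). The only remaining work is the sign bookkeeping in the mixed cases, which the paper carries out explicitly and which you have correctly identified as the computational core.
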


Before going into the proof of the theorem, we make a few remarks
on the compatibilities in \thref{1}. Aside from the fact that $V$
 is not a Lie algebra, $(LE3)$ and $(LE4)$ are exactly the
compatibilities defining a matched pair of Lie algebras
\cite[Definition 8.3.1]{majid2}. The compatibility condition
$(LE5)$ is called the \emph{twisted module condition} for the
action $\triangleright$; in the case when $V$ is a Lie algebra it
measures how far $(\mathfrak{g}, \triangleright)$ is from being a
left $V$-module. $(LE6)$ is called the \emph{twisted cocycle
condition}: if $\triangleright$ is the trivial action and $(V,
\{-, \, - \})$ is a Lie algebra then the compatibility condition
$(LE6)$ is exactly the classical $2$-cocycle condition for Lie
algebras. $(LE7)$ is called the \emph{twisted Jacobi condition}:
it measures how far $\{-, \, -\}$ is from being a Lie structure on
$V$. If either $\triangleleft$ or $f$ is the trivial map, then
$(LE7)$ is equivalent to $\{-, \, -\}$ being a Lie bracket on $V$.

\begin{proof}
For any $g \in \mathfrak{g}$ and $x \in V$ we have:
\begin{eqnarray*}
[(g, x), \, (g, x)] &=& \bigl([g, \, g] + x \triangleright g - x
\triangleright g + f(x, x), \, \{x, \, x\} + x \triangleleft g - x
\triangleleft g\bigl)\\
&=& \bigl(f(x, x), \, \{x, \, x\}\bigl)
\end{eqnarray*}
Therefore, $[(g, x), \, (g, x)] = 0$ if and only if $(LE1)$ holds.
From now on we will assume that $(LE1)$ holds. In particular, we
have that $f(x, y) = - f(y, x)$ and $\{x, y\} = - \{y, x\}$, for
all $x$, $y \in V$ since $f$ and $\{-, -\}$ are bilinear maps.
Thus $\mathfrak{g} \,\natural \, V$ is a Lie algebra if and only
if Jacobi's identity holds, i.e.:
\begin{equation}\eqlabel{005}
\bigl[(g, x), \, [(h, y), \, (l, z)]\bigl] + \bigl[(h, y), \, [(l,
z), \, (g, x)]\bigl] + \bigl[(l, z), \, [(g, x), \, (h, y)]\bigl]
= 0
\end{equation}
for all $g$, $h$, $l \in \mathfrak{g}$ and $x$, $y$, $z \in V$.
Since in $\mathfrak{g} \,\natural \, V$ we have $(g, x) = (g, 0) +
(0, x)$ it follows that \equref{005} holds if and only if it holds
for all generators of $\mathfrak{g} \,\natural \, V$, i.e. the set
$\{(g, \, 0) ~|~ g \in \mathfrak{g}\} \cup \{(0, \, x) ~|~ x \in
V\}$. Since \equref{005} is invariant under circular permutations
we are left with only three cases to study. First, we should
notice that \equref{005} holds for the triple $(g, 0)$, $(h, 0)$,
$(l, 0)$ as we have:
\begin{eqnarray*}
&&\bigl[(g, 0), \, [(h, 0), \, (l, 0)]\bigl] + \bigl[(h, 0), \,
[(l,
0), \, (g, 0)]\bigl] + \bigl[(l, 0), \, [(g, 0), \, (h, 0)]\bigl] =\\
&& = \bigl([g, \, [h, \,l] ] + [h, \, [l, \, g] ] + [l, \, [g, \,
h] ], 0\bigl) = (0, 0)
\end{eqnarray*}
Next, we prove that \equref{005} holds for $(g, 0)$, $(h, 0)$,
$(0, x)$ if and only if $(LE2)$ and $(LE3)$ hold. Indeed, we have:
\begin{eqnarray*}
&&\bigl[(g, 0), \, [(h, 0), \, (0, x)]\bigl] + \bigl[(h, 0), \,
[(0, x), \, (g, 0)]\bigl] + \bigl[(0, x), \, [(g, 0), \, (h, 0)]\bigl] = \\
&\stackrel{\equref{001}, \equref{002}} {=}& \left[(g, 0), \, (- x
\triangleright h, - x \triangleleft h)\right] + \left[(h, 0), \,
(x \triangleright g, x \triangleleft g)\right] + \left[(0, x),\,
([g, h], 0) \right]\\
&\stackrel{\equref{001}, \equref{002}} {=}&
\bigl(- [g, \, x \triangleright h] + (x \triangleleft h)
\triangleright g
 + [h, \, x \triangleright g] - (x \triangleleft g) \triangleright h +
 x \triangleright [g, \, h],\\
&&(x \triangleleft h) \triangleleft g - (x \triangleleft g)
\triangleleft h + x \triangleleft [g,\, h] \bigl)
\end{eqnarray*}
Thus we proved that \equref{005} holds for $(g, 0)$, $(h, 0)$,
$(0, x)$ if and only if $(LE2)$ and $(LE3)$ hold. Now, we prove
that \equref{005} holds for $(g, 0)$, $(0, x)$, $(0, y)$ if and
only if $(LE4)$ and $(LE5)$ hold. Indeed, we have:
\begin{eqnarray*}
&&\bigl[(g, 0), \, [(0, x), \, (0, y)]\bigl] + \bigl[(0, x), \,
[(0, y), \, (g, 0)]\bigl] + \bigl[(0, y), \, [(g, 0), \, (0, x)]\bigl] = \\
&\stackrel{\equref{001}, \equref{002}} {=}& \bigl[(g, 0), \,
\bigl(f(x, y), \{x, \, y\}\bigl)\bigl] + [(0, x), \, (y
\triangleright g, y \triangleleft g)]
 + [(0, y), \, (- x \triangleright g, - x \triangleleft g)]\\
&\stackrel{\equref{001}, \equref{002}} {=}& \bigl([g,\, f(x, y)] -
\{x,\, y\} \triangleright g, - \{x,\, y\} \triangleleft g\bigl) +
\bigl(x \triangleright (y \triangleright g) + f(x, y \triangleleft g), \,
x \triangleleft (y \triangleright g) +\\
&& \{x,\, y \triangleleft g\}\bigl) + \bigl(y \triangleright (- x
\triangleright g)
+ f(y, - x \triangleleft g), y \triangleleft (- x \triangleright g)
+ \{y,\, - x \triangleleft g\}\bigl)\\
&{=}& \bigl([g,\, f(x, y)] - \{x,\, y\} \triangleright g + x
\triangleright (y \triangleright g) + f(x, y \triangleleft g) -
y \triangleright (x \triangleright g) - f(y, x \triangleleft g), \\
&& - \{x,\, y\} \triangleleft g + x \triangleleft (y
\triangleright g) + \{x,\, y \triangleleft g\} - y \triangleleft
(x \triangleright g) - \{y,\, x \triangleleft g\}\bigl)
\end{eqnarray*}
Therefore, having in mind that for all $x$, $y \in V$ we have
$f(x, y) = - f(y, x)$ and $\{x, y\} = - \{y, x\}$ it follows that
\equref{005} holds for $(g, 0)$, $(0, x)$ $(0, y)$ if and only if
$(LE4)$ and $(LE5)$ hold. Finally, we will prove that \equref{005}
holds for $(0, x)$, $(0, y)$, $(0, z)$ if and only if $(LE6)$ and
$(LE7)$ hold. Indeed, we have:
\begin{eqnarray*}
&&\bigl[(0, x), \, [(0, y), \, (0, z)]\bigl] + \bigl[(0, y), \,
[(0, z), \, (0, x)]\bigl] + \bigl[(0, z), \, [(0, x), \, (0, y)]\bigl] = \\
&{=}& \bigl[(0, x),\, (f(y, z), \{y, z\})\bigl] + \bigl[(0, y), \,
(f(z, x), \{z, x\})  \bigl] + \bigl[(0, z),\, (f(x, y), \{x, y\})\bigl]\\
&\stackrel{\equref{002}}{=}& \bigl(x \triangleright f(y, z) + f(x,
\{y,\, z\}),\, x \triangleleft f(y, z) + \{x, \{y,\, z\}\}\bigl) +
\bigl(y \triangleright f(z, x) + f(y, \{z,\, x\}), \\
&& y \triangleleft f(z, x) + \{y, \{z,\, x\}\}\bigl) + \bigl(z
\triangleright f(x, y) + f(z, \{x,\, y\}),\, z \triangleleft f(x,
y) + \{z, \{x,\, y\}\}\bigl)
\end{eqnarray*}
Thus, \equref{005} holds for $(0, x)$, $(0, y)$, $(0, z)$ if and
only if $(LE6)$ and $(LE7)$ hold and the proof is finished.
\end{proof}

From now on, in light of \thref{1}, a Lie extending structure of
$\mathfrak{g}$ through $V$ will be viewed as a system
$\Omega(\mathfrak{g}, V) = \bigl(\triangleleft, \, \triangleright,
\, f, \{-, \, -\} \bigl)$ satisfying the compatibility conditions
$(LE1)-(LE7)$. We denote by ${\mathcal L} (\mathfrak{g}, V)$ the
set of all Lie extending structures of $\mathfrak{g}$ through $V$.

\begin{example}\exlabel{twistedproduct}
We provide the first example of a Lie extending structure and the
corresponding unified product. More examples will be given in
\seref{cazurispeciale} and \seref{exemple}.

Let $\Omega(\mathfrak{g}, V) = \bigl(\triangleleft, \,
\triangleright, \, f, \{-, \, -\} \bigl)$ be an extending datum of
a Lie algebra $\mathfrak{g}$ through a vector space $V$ such that
$\triangleleft$ and $\triangleright$ are both trivial maps, i.e.
$x\triangleleft g = x \triangleright g = 0$, for all $x$, $y\in V$
and $g \in \mathfrak{g}$. Then, $\Omega(\mathfrak{g}, V) =
\bigl(f, \{-, \, -\} \bigl)$ is a Lie extending structure of
$\mathfrak{g}$ through $V$ if and only if $(V, \{-, -\})$ is a Lie
algebra and $f: V \times V \to \mathfrak{g}$ is a classical
$2$-cocycle, that is:
\begin{equation*}\eqlabel{2cociclucl}
f(x, x) = 0, \qquad \left[g, \, f(x, y)\right] = 0, \qquad
f\bigl(x, \{y,\, z \}\bigl) + f\bigl(y, \{z,\, x \}\bigl) +
f\bigl(z, \{x,\, y \}\bigl)  = 0
\end{equation*}
for all $g \in \mathfrak{g}$, $x$, $y$, $z \in V$. In this case,
the associated unified product $\mathfrak{g} \,\natural
\,_{\Omega(\mathfrak{g}, V)} V$ will be denoted by $\mathfrak{g}
\#^f \, V $ and we shall call it the \emph{twisted product} of the
Lie algebras $\mathfrak{g}$ and $V$. Hence, the twisted product
associated to a given $2$-cocycle $f: V \times V \to \mathfrak{g}$
between Lie algebras is the vector space $\mathfrak{g} \times \, V
$ with the bracket given for any $g$, $h \in \mathfrak{g}$ and
$x$, $y \in V$ by:
\begin{equation}\eqlabel{twistedproduct}
[(g, x), \, (h, y)] := \bigl( [g, \, h] + f(x, y), \, \{x, \, y \}
\bigl)
\end{equation}
The twisted product of two Lie algebras plays the crucial role in
the classification of all $6$-dimensional nilpotent Lie algebras
given in \cite{gra}.
\end{example}

Let $\Omega(\mathfrak{g}, V) = \bigl(\triangleleft, \,
\triangleright, \, f, \{-, \, -\} \bigl) \, \in {\mathcal L}
(\mathfrak{g}, V)$ be a Lie extending structure and $\mathfrak{g}
\,\natural \, V$ the associated unified product. Then the
canonical inclusion
$$
i_{\mathfrak{g}}: \mathfrak{g} \to \mathfrak{g} \,\natural \, V,
\qquad i_{\mathfrak{g}}(g) = (g, \, 0)
$$
is an injective Lie algebra map. Therefore, we can see
$\mathfrak{g}$ as a Lie subalgebra of $\mathfrak{g} \,\natural \,
V$ through the identification $\mathfrak{g} \cong
i_{\mathfrak{g}}(\mathfrak{g}) \cong \mathfrak{g} \times \{0\}$.
Conversely, we will prove that any Lie algebra structure on a
vector space $E$ containing $\mathfrak{g}$ as a Lie subalgebra is
isomorphic to a unified product. In this way, we obtain the answer
to the description part of the extending structures problem:

\begin{theorem}\thlabel{classif}
Let $\mathfrak{g}$ be a Lie algebra, $E$ a vector space containing
$\mathfrak{g}$ as a subspace and $[-, \,-]$ a Lie algebra
structure on $E$ such that $\mathfrak{g}$ is a Lie subalgebra in
$(E, [-, \,-])$. Then there exists a Lie extending structure
$\Omega(\mathfrak{g}, V) = \bigl(\triangleleft, \, \triangleright,
\, f, \{-, \, -\} \bigl)$ of $\mathfrak{g}$ trough a subspace $V$
of $E$ and an isomorphism of Lie algebras $(E, [-, \,-]) \cong
\mathfrak{g} \,\natural \, V$ that stabilizes $\mathfrak{g}$ and
co-stabilizes $V$.
\end{theorem}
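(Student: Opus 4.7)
The natural plan is to reconstruct the Lie extending structure from the ambient bracket on $E$ using a fixed linear complement, and then check that the obvious bijection between $\mathfrak{g}\times V$ and $E$ is a Lie algebra isomorphism.

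First, I would choose (by the axiom of choice) a linear complement $V$ of $\mathfrak{g}$ in $E$, so that every $e\in E$ has a unique decomposition $e = p(e) + \pi(e)$ with $p(e)\in\mathfrak{g}$ and $\pi(e)\in V$; here $p:E\to\mathfrak{g}$ and $\pi:E\to V$ are the associated linear projections. Using the bracket $[-,-]$ of $E$, I would then define the four bilinear maps required by \deref{exdatum} by the formulas
\begin{eqnarray*}
x\triangleright g := p\bigl([x,\,g]\bigl), &\quad& x\triangleleft g := \pi\bigl([x,\,g]\bigl),\\
f(x,y) := p\bigl([x,\,y]\bigl), &\quad& \{x,\,y\} := \pi\bigl([x,\,y]\bigl),
\end{eqnarray*}
for all $g\in\mathfrak{g}$ and $x$, $y\in V$. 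This is forced: these are the only maps for which the decomposition $E=\mathfrak{g}\oplus V$ is compatible with the bracket in the required way. At this stage I would not yet try to verify the axioms $(LE1)$--$(LE7)$ directly; instead I would obtain them for free at the end by invoking \thref{1}.

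Next, I would introduce the candidate isomorphism
$$
\varphi:\mathfrak{g}\times V\longrightarrow E,\qquad \varphi(g,x):= g+x.
$$
Since $V$ is a complement of $\mathfrak{g}$ in $E$, $\varphi$ is a linear bijection; it obviously stabilizes $\mathfrak{g}$ (as $\varphi(g,0)=g$) and co-stabilizes $V$ (as $\pi(\varphi(g,x))=x$). The core computation, which is the only place real work happens, is to show that $\varphi$ intertwines the bracket \equref{brackunif} on $\mathfrak{g}\,\natural\,V$ (defined using the four maps above) with the bracket on $E$. Expanding
$$
[\varphi(g,x),\,\varphi(h,y)] = [g,h] + [x,h] - [y,g] + [x,y]
$$
in $E$ and splitting each of $[x,h]$, $[y,g]$, $[x,y]$ into their $\mathfrak{g}$- and $V$-components via $p$ and $\pi$ reproduces exactly the right-hand side of \equref{brackunif}; this is a direct unpacking of the definitions and takes one short line.

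Finally, because $\varphi$ is a linear bijection from $\mathfrak{g}\times V$ to the Lie algebra $(E,[-,-])$ intertwining the two brackets, the bracket \equref{brackunif} on $\mathfrak{g}\times V$ is automatically a Lie bracket. By \thref{1} this forces $\Omega(\mathfrak{g},V)=(\triangleleft,\triangleright,f,\{-,-\})$ to satisfy the axioms $(LE1)$--$(LE7)$, hence to be a Lie extending structure, and $\mathfrak{g}\,\natural\,V$ is the corresponding unified product. The map $\varphi$ is then the required Lie algebra isomorphism stabilizing $\mathfrak{g}$ and co-stabilizing $V$. No genuine obstacle arises: the only risk is a bookkeeping slip in the bracket expansion, but the sign conventions in \equref{brackunif} are chosen precisely to match the splitting $e=p(e)+\pi(e)$.
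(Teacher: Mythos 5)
Your proposal is correct and follows essentially the same route as the paper: choosing a complement $V$ with projections $p$ and $\pi$ is equivalent to the paper's choice of a retraction $p$ with $V=\ker(p)$, and your formulas $x\triangleleft g=\pi([x,g])$, $\{x,y\}=\pi([x,y])$ coincide with the paper's $[x,g]-p([x,g])$ and $[x,y]-p([x,y])$. The key device --- transporting the bracket of $E$ through the linear bijection $\varphi(g,x)=g+x$ and invoking \thref{1} to get $(LE1)$--$(LE7)$ for free, rather than verifying them directly --- is exactly the trick used in the paper's proof.
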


\begin{proof} As $k$ is a field, there exists a linear map $p: E \to
\mathfrak{g}$ such that $p(g) = g$, for all $g \in \mathfrak{g}$.
Then $V := \rm{ker}(p)$ is a subspace of $E$ and a complement of
$\mathfrak{g}$ in $E$. We define the extending datum of
$\mathfrak{g}$ through $V$ by the following formulas:
\begin{eqnarray*}
\triangleright = \triangleright_p : V \times \mathfrak{g} \to
\mathfrak{g}, \qquad x \triangleright g &:=& p \bigl([x, \,g]\bigl)\\
\triangleleft = \triangleleft_p: V \times \mathfrak{g} \to V,
\qquad x \triangleleft g &:=& [x, \, g] - p \bigl([x, \, g]\bigl)\\
f = f_p: V \times V \to \mathfrak{g}, \qquad f(x, y) &:=&
p \bigl([x, \, y]\bigl)\\
\{\, , \, \} = \{\, , \, \}_p: V \times V \to V, \qquad \{x, y\}
&:=& [x, \, y] - p \bigl([x, \, y]\bigl)
\end{eqnarray*}
for any $g \in \mathfrak{g}$ and $x$, $y\in V$. First of all, we
observe that the above maps are all well defined bilinear maps: $x
\triangleleft g \in V$ and $\{x, \, y \} \in V$, for all $x$, $y
\in V$ and $g \in \mathfrak{g}$. We shall prove that
$\Omega(\mathfrak{g}, V) = \bigl(\triangleleft, \, \triangleright,
\, f, \{-, \, -\} \bigl)$ is a Lie extending structure of
$\mathfrak{g}$ trough $V$ and
\begin{eqnarray*}
\varphi: \mathfrak{g} \,\natural \, V \to E, \qquad \varphi(g, x)
:= g+x
\end{eqnarray*}
is an isomorphism of Lie algebras that stabilizes $\mathfrak{g}$
and co-stabilizes $V$. Instead of proving the seven compatibility
conditions $(LE1)-(LE7)$, which requires a long and laborious
computation, we use the following trick combined with \thref{1}:
$\varphi: \mathfrak{g} \times V \to E$, $\varphi(g, \, x) := g+x$
is a linear isomorphism between the Lie algebra $E$ and the direct
product of vector spaces $\mathfrak{g} \times V$ with the inverse
given by $\varphi^{-1}(y) := \bigl(p(y), \, y - p(y)\bigl)$, for
all $y \in E$. Thus, there exists a unique Lie algebra structure
on $\mathfrak{g} \times V$ such that $\varphi$ is an isomorphism
of Lie algebras and this unique bracket on $\mathfrak{g} \times V$
is given by
$$
[(g, x), \, (h, y)] := \varphi^{-1} \bigl([\varphi(g, x), \,
\varphi(h, y)]\bigl)
$$
for all $g$, $h \in \mathfrak{g}$ and $x$, $y\in V$. The proof is
completely finished if we prove that this bracket coincides with
the one defined by \equref{brackunif} associated to the system
$\bigl(\triangleleft_p, \, \triangleright_p, \, f_p, \{-, \, -\}_p
\bigl)$. Indeed, for any $g$, $h \in \mathfrak{g}$ and $x$, $y\in
V$ we have:
\begin{eqnarray*}
[(g, x), \, (h, y)] &=& \varphi^{-1} \bigl([\varphi(g, x), \,
\varphi(h, y)]\bigl)
= \varphi^{-1} \bigl([g, \, h] + [g, \, y] + [x, \, h] + [x, \, y]\bigl)\\
&=& \bigl(p([g, \, h]), [g, \, h] - p([g, \, h])\bigl) +
\bigl(p([g, \, y]), [g, \, y] - p([g, \, y])\bigl)\\
&& + \bigl(p([x, \, h]), [x, \, h] - p([x, \, h])\bigl) +
\bigl(p([x, \, y]), [x, \, y] - p([x, \, ])\bigl)\\
&=& \Bigl(p([g, \, h]) + p([g, \, y]) + p([x, \, h]) +
p([x, \, y]), \ [g, \, h] + [g, \, y]\\
&&+ [x, \, h] + [x, \, y] - p([g, \, h]) - p([g, \, y])
- p([x, \, h]) - p([x, \, y])\Bigl)\\
&=& \Bigl([g, \, h] - y \triangleright g + x \triangleright h +
f(x, y), \,\{x, y\} + x \triangleleft h - y \triangleleft g\Bigl)
\end{eqnarray*}
as needed. Moreover, the following diagram is commutative
\begin{eqnarray*}
\xymatrix {& \mathfrak{g} \ar[r]^{i_{\mathfrak{g}}} \ar[d]_{Id} &
{\mathfrak{g} \,\natural \, V} \ar[r]^{q} \ar[d]^{\varphi} & V \ar[d]^{Id}\\
& \mathfrak{g} \ar[r]^{i} & {E}\ar[r]^{\pi } & V}
\end{eqnarray*}
where $\pi : E \to V$ is the projection of $E = \mathfrak{g} + V$
on the vector space $V$ and $q: {\mathfrak{g} \,\natural \, V} \to
V$, $q (g, x) := x$ is the canonical projection. The proof is now
finished.
\end{proof}

Using \thref{classif}, the classification of all Lie algebra
structures on $E$ that contains $\mathfrak{g}$ as a Lie
subalgebra, reduces to the classification of all unified products
$\mathfrak{g} \,\natural \, V$, associated to all Lie extending
structures $\Omega(\mathfrak{g}, V) = \bigl(\triangleleft, \,
\triangleright, \, f, \{-, \, -\} \bigl)$, for a given complement
$V$ of $\mathfrak{g}$ in $E$. In order to construct the
cohomological objects ${\mathcal H}^{2}_{\mathfrak{g}} \, (V, \,
\mathfrak{g} )$ and ${\mathcal H}^{2} \, (V, \, \mathfrak{g} )$
which will parameterize the classifying sets ${\rm Extd} \, (E,
\mathfrak{g})$ and respectively ${\rm Extd}' \, (E, \mathfrak{g})$
defined in \deref{echivextedn}, we need the following technical
lemma:

\begin{lemma} \lelabel{morfismuni}
Let $\Omega(\mathfrak{g}, V) = \bigl(\triangleleft, \,
\triangleright, \, f, \{-, \, -\} \bigl)$ and
$\Omega'(\mathfrak{g}, V) = \bigl(\triangleleft ', \,
\triangleright ', \, f', \{-, \, -\}' \bigl)$ be two Lie algebra
extending structures of $\mathfrak{g}$ trough $V$ and $
\mathfrak{g} \,\natural \, V$, $ \mathfrak{g} \,\natural \, ' V$
the associated unified products. Then there exists a bijection
between the set of all morphisms of Lie algebras $\psi:
\mathfrak{g} \,\natural \, V \to \mathfrak{g} \,\natural \, ' V$
which stabilizes $\mathfrak{g}$ and the set of pairs $(r, v)$,
where $r: V \to \mathfrak{g}$, $v: V \to V$ are two linear maps
satisfying the following compatibility conditions for any $g \in
\mathfrak{g}$, $x$, $y \in V$:
\begin{enumerate}
\item[(ML1)] $v(x) \triangleleft ' g = v(x \triangleleft g)$;
\item[(ML2)] $r(x \triangleleft g) = [r(x),\, g] - x
\triangleright g + v(x) \triangleright ' g$; \item[(ML3)] $v(\{x,
y\}) = \{v(x), v(y)\}' + v(x) \triangleleft ' r(y) - v(y)
\triangleleft ' r(x)$; \item[(ML4)] $r(\{x, y\}) = [r(x), \, r(y)]
+ v(x) \triangleright ' r(y) - v(y) \triangleright ' r(x) + f'
\bigl(v(x), v(y)\bigl) - f(x, y)$
\end{enumerate}
Under the above bijection the morphism of Lie algebras $\psi =
\psi_{(r, v)}: \mathfrak{g} \,\natural \, V \to \mathfrak{g}
\,\natural \, ' V$ corresponding to $(r, v)$ is given for any $g
\in \mathfrak{g}$ and $x \in V$ by:
$$
\psi(g, x) = (g + r(x), v(x))
$$
Moreover, $\psi = \psi_{(r, v)}$ is an isomorphism if and only if
$v: V \to V$ is an isomorphism and $\psi = \psi_{(r, v)}$
co-stabilizes $V$ if and only if $v = {\rm Id}_V$.
\end{lemma}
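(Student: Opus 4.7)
The plan is to parametrize all linear maps $\psi:\mathfrak{g}\,\natural\,V\to\mathfrak{g}\,\natural\,'V$ that stabilize $\mathfrak{g}$, and then impose the multiplicative constraint by checking it on a generating set.

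First, I would observe that stabilizing $\mathfrak{g}$ forces $\psi(g,0)=(g,0)$ for all $g\in\mathfrak{g}$. Since $\mathfrak{g}\,\natural\,V=\mathfrak{g}\times V$ as a vector space, the restriction of $\psi$ to $\{0\}\times V$ is determined by a pair of linear maps: write $\psi(0,x)=\bigl(r(x),\,v(x)\bigr)$ for uniquely defined $r:V\to\mathfrak{g}$ and $v:V\to V$. By linearity, every $\psi$ stabilizing $\mathfrak{g}$ has the stated form $\psi(g,x)=\bigl(g+r(x),\,v(x)\bigr)$, and conversely every such formula defines a linear map stabilizing $\mathfrak{g}$. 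This sets up a bijection between such linear maps and pairs $(r,v)$.

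Next, I would characterize when $\psi_{(r,v)}$ is a Lie algebra homomorphism by testing the identity $\psi\bigl([(g,x),(h,y)]\bigr)=[\psi(g,x),\psi(h,y)]'$ on the generating set $\{(g,0)\mid g\in\mathfrak{g}\}\cup\{(0,x)\mid x\in V\}$. The case $(g,0),(h,0)$ is automatic because $\psi$ is the identity on $\mathfrak{g}\times\{0\}$ and $\mathfrak{g}$ is a Lie subalgebra in both products. For the mixed case $(g,0),(0,x)$, using \equref{001} on the left gives $\psi([(g,0),(0,x)])=\psi(x\triangleright g,\,x\triangleleft g)$ and computing the right-hand side $[(g,0),(r(x),v(x))]'$ via \equref{001} again, a direct comparison of the two components yields exactly (ML1) and (ML2) (after rearranging and using bilinearity/antisymmetry of the bracket in $\mathfrak{g}$). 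For the case $(0,x),(0,y)$, use \equref{002} to expand $\psi([(0,x),(0,y)])=\psi\bigl(f(x,y),\{x,y\}\bigr)=\bigl(f(x,y)+r(\{x,y\}),\,v(\{x,y\})\bigr)$, and expand $[(r(x),v(x)),(r(y),v(y))]'$ using \equref{brackunif} for $\natural'$; matching first and second components yields (ML4) and (ML3) respectively.

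For the last two assertions, I would note that under the identification $\mathfrak{g}\,\natural\,V\cong\mathfrak{g}\times V$, the map $\psi_{(r,v)}$ is block–triangular with blocks $\mathrm{Id}_{\mathfrak{g}}$ on the $\mathfrak{g}$ factor and $v$ on the $V$ factor, so it is a linear isomorphism if and only if $v$ is. Co-stabilizing $V$ means $q\circ\psi=q$, where $q:\mathfrak{g}\,\natural\,'V\to V$ is the second-coordinate projection; since $q\bigl(\psi(g,x)\bigr)=v(x)$, this is equivalent to $v=\mathrm{Id}_V$.

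The only real obstacle is keeping the bookkeeping straight in the case $(0,x),(0,y)$, where both the cocycles $f,f'$ and the twisted brackets $\{-,-\},\{-,-\}'$ contribute simultaneously to the two components; everything else is a mechanical verification using \equref{001}--\equref{002} and bilinearity.
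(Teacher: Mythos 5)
Your proposal is correct and follows essentially the same route as the paper: parametrize the $\mathfrak{g}$-stabilizing linear maps by pairs $(r,v)$ via $\psi(g,x)=(g+r(x),\,v(x))$, test the bracket condition only on the generators $(g,0)$ and $(0,x)$ to obtain (ML1)--(ML4), and read off the isomorphism and co-stabilization criteria from $v$ (the paper writes the inverse $\psi^{-1}(h,y)=(h-r(v^{-1}(y)),v^{-1}(y))$ explicitly, which is the same block-triangular observation you make). One trivial slip: by \equref{001} one has $[(g,0),(0,x)]=(-x\triangleright g,\,-x\triangleleft g)$ rather than $(x\triangleright g,\,x\triangleleft g)$, but this only changes an overall sign on both sides and the resulting conditions are unchanged.
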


\begin{proof}
A linear map $\psi: \mathfrak{g} \,\natural \, V \to \mathfrak{g}
\,\natural \, ' V$ which makes the following diagram commutative:
$$
\xymatrix {& {\mathfrak{g}} \ar[r]^{i_{\mathfrak{g}}}
\ar[d]_{Id_{\mathfrak{g}}}
& {\mathfrak{g} \,\natural \, V}\ar[d]^{\psi}\\
& {\mathfrak{g}} \ar[r]^{i_{\mathfrak{g}}} & {\mathfrak{g}
\,\natural \, ' V}}
$$
is uniquely determined by two linear maps $r: V \to \mathfrak{g}$,
$v: V \to V$ such that $\psi(g, x) = (g + r(x), v(x))$, for all $g
\in \mathfrak{g}$, and $x \in V$. Indeed, if we denote $\psi(0, x)
= (r(x), v(x)) \in \mathfrak{g} \times V$ for all $x \in V$, we
have:
\begin{eqnarray*}
\psi(g, x) &=& \psi \bigl((g, 0) + \psi(0, x)\bigl) = \psi(g, 0) + \psi(0, x)\\
&=&(g, 0) + \bigl(r(x), v(x)\bigl) = \bigl(g + r(x), v(x) \bigl)
\end{eqnarray*}
Let $\psi = \psi_{(r, v)}$ be such a linear map, i.e. $\psi(g, x) =
(g + r(x), v(x))$, for some linear maps $r: V \to \mathfrak{g}$,
$v: V \to V$. We will prove that $\psi$ is a morpshism of Lie
algebras if and only if the compatibility conditions $(ML1)-(ML4)$
hold. It is enough to prove that the compatibility
\begin{equation}\eqlabel{Liemap}
\psi \bigl([(g, x), \, (h, y)] \bigl) = [\psi(g, x), \, \psi(h,
y)]
\end{equation}
holds for all generators of $\mathfrak{g} \,\natural \, V$. First
of all, it is easy to see that \equref{Liemap} holds for the pair
$(g, 0)$, $(h, 0)$, for all $g$, $h \in \mathfrak{g}$. Now we
prove that \equref{Liemap} holds for the pair $(g, 0)$, $(0, x)$
if and only if $(ML1)$ and $(ML2)$ hold. Indeed, $\psi \bigl([(g,
0), \, (0, x)] \bigl) = [\psi(g, 0), \, \psi(0, x)]$ it is
equivalent to $\psi(- x \triangleright g, - x \triangleleft g) =
[(g, 0), \, (r(x), v(x))]$ and hence to $(- x \triangleright g +
r(- x \triangleleft g), v(- x \triangleleft g)) = ([g, \, r(x)] -
v(x) \triangleright ' g, - v(x) \triangleleft ' g)$, i.e. to the
fact that $(ML1)$ and $(ML2)$ hold. In the same manner it can be
proved that \equref{Liemap} holds for the pair $(0, x)$, $(g, 0)$
if and only if $(ML1)$ and $(ML2)$ hold.

Next, we prove that \equref{Liemap} holds for the pair $(0, x)$,
$(0, y)$ if and only if $(ML3)$ and $(ML4)$ hold. Indeed, $\psi
\bigl([(0, x), \, (0, y)] \bigl) = [\psi(0, x), \, \psi(0, y)]$ it
is equivalent to $\psi(f(x, y), \{x, y\}) = [(r(x), v(x)), \,
(r(y), v(y))]$; therefore it is equivalent to: $\bigl(f(x, y) + r(\{x, y\}), v(\{x,
y\})\bigl) = \bigl([r(x), \, r(y)] + v(x) \triangleright ' r(y) -
v(y) \triangleright ' r(x) + f'(v(x), v(y)), \{v(x), v(y)\}' +
v(x) \triangleleft ' r(y) - v(y) \triangleleft ' r(x)\bigl)$, i.e.
to the fact that $(ML3)$ and $(ML4)$ hold.

Assume now that $v: V \to V$ is bijective. Then $\psi_{(r, v)}$ is
an isomorphism of Lie algebras with the inverse given for any
$h \in \mathfrak{g}$ and $y \in V$ by:
$$
\psi_{(r, v)}^{-1}(h, y) = \bigl(h - r(v^{-1}(y)), v^{-1}(y)\bigl)
$$
Conversely, assume that $\psi_{(r, v)}$ is bijective. It follows
easily that $v$ is surjective. Thus, we are left to prove that $v$
is injective. Indeed, let $x \in V$ such that $v(x) = 0$. We have
 $\psi_{(r, v)}(0, 0) = (0, 0) = (0, v(x)) = \psi_{(r, v)}(-
r(x), x)$, and hence we obtain $x = 0$, i.e. $v$ is a bijection.
The last assertion is trivial and the proof is now finished.
\end{proof}

\begin{definition}\delabel{echiaa}
Let $\mathfrak{g}$ be a Lie algebra and $V$ a $k$-vector space.
Two Lie algebra extending structures of $\mathfrak{g}$ by $V$,
$\Omega(\mathfrak{g}, V) = \bigl(\triangleleft, \, \triangleright,
\, f, \{-, \, -\} \bigl)$ and $\Omega'(\mathfrak{g}, V) =
\bigl(\triangleleft ', \, \triangleright ', \, f', \{-, \, -\}'
\bigl)$ are called \emph{equivalent}, and we denote this by
$\Omega(\mathfrak{g}, V) \equiv \Omega'(\mathfrak{g}, V)$, if
there exists a pair $(r, v)$ of linear maps, where $r: V \to
\mathfrak{g}$ and $v \in {\rm Aut}_{k}(V)$ such that
$\bigl(\triangleleft ', \, \triangleright ', \, f', \{-, \, -\}'
\bigl)$ is implemented from $\bigl(\triangleleft, \,
\triangleright, \, f, \{-, \, -\} \bigl)$ using $(r, v)$ via:
\begin{eqnarray*}
x \triangleleft ' g &=& v \bigl(v^{-1}(x) \triangleleft g\bigl)\\
x \triangleright ' g &=& r \bigl(v^{-1}(x) \triangleleft g\bigl) +
v^{-1}(x) \triangleright g + [g, \, r\bigl(v^{-1}(x)\bigl)]\\
f'(x, y) &=& f \bigl(v^{-1}(x), v^{-1}(y)\bigl) + r \bigl(\{v^{-1}(x),
v^{-1}(y)\}\bigl) + [r\bigl(v^{-1}(x)), \, r\bigl(v^{-1}(y)\bigl)]\\
&& - \, r\bigl(v^{-1}(x) \triangleleft r
\bigl(v^{-1}(y)\bigl)\bigl) - v^{-1}(x) \triangleright r
\bigl(v^{-1}(y)\bigl)\bigl)
+  r\bigl(v^{-1}(y) \triangleleft r \bigl(v^{-1}(x)\bigl)\bigl)\\
&&  + \, v^{-1}(y) \triangleright r \bigl(v^{-1}(x)\bigl)\bigl) \\
\{x, y\}' &=& v \bigl(\{v^{-1}(x), v^{-1}(y)\}\bigl) - v
\bigl(v^{-1}(x) \triangleleft r \bigl(v^{-1}(y)\bigl)\bigl)
 + v \bigl(v^{-1}(y) \triangleleft r \bigl(v^{-1}(x)\bigl)\bigl)
\end{eqnarray*}
for all $g \in \mathfrak{g}$, $x$, $y \in V$.
\end{definition}

We recall from \deref{echivextedn} that ${\rm Extd} \, (E,
\mathfrak{g})$ denotes the set of all equivalence classes of Lie
algebra structures on $E$ which stabilizes $\mathfrak{g}$. As a
conclusion of this section the main result of this paper, which
gives the theoretical answer to the extending structure problem,
follows:

\begin{theorem}\thlabel{main1}
Let $\mathfrak{g}$ be a Lie algebra, $E$ a vector space that
contains $\mathfrak{g}$ as a subspace and $V$ a complement of
$\mathfrak{g}$ in $E$. Then:

$(1)$ $\equiv$ is an equivalence relation on the set ${\mathcal L}
(\mathfrak{g}, V)$ of all Lie extending structures of
$\mathfrak{g}$ through $V$. We denote by ${\mathcal
H}^{2}_{\mathfrak{g}} \, (V, \, \mathfrak{g} ) := {\mathcal L}
(\mathfrak{g}, V)/ \equiv $, the pointed quotient set.

$(2)$ The map
$$
{\mathcal H}^{2}_{\mathfrak{g}} \, (V, \, \mathfrak{g} ) \to {\rm
Extd} \, (E, \mathfrak{g}), \qquad \overline{(\triangleleft,
\triangleright, f, \{-, \, -\})} \rightarrow \bigl(\mathfrak{g}
\,\natural \, V, \, [ -  , \, - ] \bigl)
$$
is bijective, where $\overline{(\triangleleft, \triangleright, f,
\{-, \, -\})}$ is the equivalence class of $(\triangleleft,
\triangleright, f, \{-, \, -\})$ via $\equiv$.
\end{theorem}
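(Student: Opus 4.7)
The strategy is to reinterpret the relation $\equiv$ on ${\mathcal L}(\mathfrak{g}, V)$ using \leref{morfismuni}, after which both parts of the theorem fall out quickly. The first step is to verify that the implementation formulas of \deref{echiaa} are precisely what one obtains by solving the compatibilities $(ML1)$--$(ML4)$ for $\triangleleft'$, $\triangleright'$, $f'$, $\{-,-\}'$ when $v$ is bijective. Substituting $x \mapsto v(x)$, $y \mapsto v(y)$ and using $(ML1)$ to push $v$ across $\triangleleft'$ should recover the four formulas in \deref{echiaa}, and the implication is reversible. This gives the key equivalence: $\Omega \equiv \Omega'$ in the sense of \deref{echiaa} if and only if there exists an isomorphism of Lie algebras $\psi_{(r,v)}: \mathfrak{g} \,\natural\, V \to \mathfrak{g} \,\natural\,' V$ stabilizing $\mathfrak{g}$, with $v \in {\rm Aut}_k(V)$ automatically (by the last assertion of \leref{morfismuni}).

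Part (1) is then a formal consequence of this reinterpretation: reflexivity comes from the identity isomorphism (corresponding to $(r, v) = (0, {\rm Id}_V)$); symmetry from the fact that $\psi_{(r,v)}^{-1}$ is again a stabilizing isomorphism, whose associated automorphism of $V$ is $v^{-1}$; and transitivity from composition of stabilizing isomorphisms. No direct verification of the compatibility conditions on the pair $(r, v)$ is required.

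For part (2), surjectivity is immediate from \thref{classif}: any Lie bracket on $E$ having $\mathfrak{g}$ as a subalgebra is isomorphic, through a map which stabilizes $\mathfrak{g}$ (indeed also co-stabilizes $V$), to some unified product $\mathfrak{g} \,\natural\, V$ arising from an element of ${\mathcal L}(\mathfrak{g}, V)$. For injectivity, suppose $\Omega, \Omega' \in {\mathcal L}(\mathfrak{g}, V)$ give rise to brackets on $E$ that are equivalent in ${\rm Extd}(E, \mathfrak{g})$; then there is a stabilizing Lie algebra isomorphism $\mathfrak{g} \,\natural\, V \to \mathfrak{g} \,\natural\,' V$, which by \leref{morfismuni} has the form $\psi_{(r,v)}$ with $v \in {\rm Aut}_k(V)$, and the first step forces $\Omega \equiv \Omega'$.

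The only genuine work is the bookkeeping in the first step that connects $(ML1)$--$(ML4)$ with the four explicit formulas of \deref{echiaa}. This amounts to solving a linear system for the primed data in terms of the unprimed data using invertibility of $v$, so the difficulty is organizational rather than conceptual; no further compatibility beyond what is already packaged in \leref{morfismuni} needs to be checked.
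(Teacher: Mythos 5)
Your proposal is correct and follows essentially the same route as the paper: the paper's proof consists precisely of the observation that $\Omega(\mathfrak{g},V)\equiv\Omega'(\mathfrak{g},V)$ in the sense of \deref{echiaa} holds if and only if there is a Lie algebra isomorphism $\mathfrak{g}\,\natural\,V\to\mathfrak{g}\,\natural\,'V$ stabilizing $\mathfrak{g}$ (obtained by solving $(ML1)$--$(ML4)$ for the primed data, where for $f'$ one uses $(ML2)$ as well as $(ML1)$), after which part $(1)$ follows formally and part $(2)$ follows from \thref{classif} and \leref{morfismuni}. Your added bookkeeping (identity, inverse, composition for the equivalence relation; surjectivity via \thref{classif}, injectivity via \leref{morfismuni}) is exactly what the paper leaves implicit.
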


\begin{proof} The proof follows from \thref{1},
\thref{classif} and \leref{morfismuni} once we observe that
$\Omega(\mathfrak{g}, V) \equiv \Omega'(\mathfrak{g}, V)$ in the
sense of \deref{echiaa} if and only if there exists an isomorphism
of Lie algebras $\psi: \mathfrak{g} \,\natural \, V \to
\mathfrak{g} \,\natural \, ' V$ which stabilizes $\mathfrak{g}$.
Therefore, $\equiv$ is
 an equivalence relation on the set ${\mathcal L} (\mathfrak{g}, V)$
of all Lie algebra extending structures $\Omega(\mathfrak{g}, V)$
and the conclusion follows from \thref{classif} and
\leref{morfismuni}.
\end{proof}

\begin{remark}\relabel{aldoileaob}
The second cohomological object ${\mathcal H}^{2} \, (V, \,
\mathfrak{g} )$ that parameterizes  ${\rm Extd}' \, (E,
\mathfrak{g})$ is constructed in a simple manner as
follows: two Lie algebra extending structures
$\Omega(\mathfrak{g}, V) = \bigl(\triangleleft, \, \triangleright,
\, f, \{-, \, -\} \bigl)$ and $\Omega'(\mathfrak{g}, V) =
\bigl(\triangleleft ', \, \triangleright ', \, f', \{-, \, -\}'
\bigl)$ are called \emph{cohomologous}, and we denote this by
$\Omega(\mathfrak{g}, V) \approx \Omega'(\mathfrak{g}, V)$ if and
only if $\triangleleft' = \triangleleft$ and there exists a linear
map $r: V \to \mathfrak{g}$ such that
\begin{eqnarray*}
x \triangleright ' g &=& x \triangleright g +
r\bigl(x \triangleleft g\bigl) - [r(x), \, g]\\
f'(x, y) &=& f (x, y) + r \bigl(\{x, \, y\}\bigl) + [r(x), \, r(y)] + \\
&&  + \, y \triangleright r (x) - x \triangleright r (y)
+ r\bigl(y \triangleleft r (x)\bigl) - r\bigl(x \triangleleft r (y) \bigl) \\
\{x, y\}' &=& \{x, y\} - x \triangleleft r (y) + y \triangleleft
r(x)
\end{eqnarray*}
for all $g \in \mathfrak{g}$, $x$, $y \in V$.

Similar to the proof of \thref{main1} we can easily see that
$\Omega(\mathfrak{g}, V) \approx \Omega'(\mathfrak{g}, V)$ if and
only if there exists an isomorphism of Lie algebras $\psi:
\mathfrak{g} \,\natural \, V \to \mathfrak{g} \,\natural \, ' V$
which stabilizes $\mathfrak{g}$ and co-stabilizes $V$. Thus,
$\approx$ is an equivalence relation on the set ${\mathcal L}
(\mathfrak{g}, V)$ of all Lie extending structures of
$\mathfrak{g}$ through $V$. If we denote ${\mathcal H}^{2} \, (V,
\, \mathfrak{g} ) := {\mathcal L} (\mathfrak{g}, V)/ \approx $,
the map
$$
{\mathcal H}^{2} \, (V, \, \mathfrak{g} ) \to {\rm Extd}' \, (E,
\mathfrak{g}), \qquad \overline{(\triangleleft, \triangleright, f,
\{-, \, -\})} \rightarrow \bigl(\mathfrak{g} \,\natural \, V, \, [
-  , \, - ] \bigl)
$$
is a bijection between ${\mathcal H}^{2} \, (V, \mathfrak{g})$ and
the isomorphism classes of all Lie algebra structures on $E$ which
stabilizes $\mathfrak{g}$ and co-stabilizes $V$.
\end{remark}

\section{Special cases of unified products}\selabel{cazurispeciale}
In this section we show that crossed products and bicrossed
products of two Lie algebras are both special cases of unified
products. We make the following convention: if one of the maps
$\triangleleft$, $\triangleright$, $f$ or $\{-, \, -\}$ of an
extending datum $\Omega(\mathfrak{g}, V) = \bigl(\triangleleft, \,
\triangleright, \, f, \{-, \, -\} \bigl)$ is trivial then we will
omit it from the quadruple $\bigl(\triangleleft, \,
\triangleright, \, f, \{-, \, -\} \bigl)$.

\subsection*{Crossed products and the extension problem}
Let $\Omega(\mathfrak{g}, V) = \bigl(\triangleleft, \,
\triangleright, \, f, \{-, \, -\} \bigl)$ be an extending datum of
$\mathfrak{g}$ through $V$ such that $\triangleleft$ is the
trivial map, i.e. $x \triangleleft g = 0$, for all $x$, $y\in V$
and $g \in \mathfrak{g}$. Then, $\Omega(\mathfrak{g}, V) =
\bigl(\triangleleft, \, \triangleright, \, f, \{-, \, -\} \bigl) =
\bigl(\triangleright, \, f, \{-, \, -\} \bigl) $ is a Lie
extending structure of $\mathfrak{g}$ through $V$ if and only if
$(V, \{-, -\})$ is a Lie algebra and the following compatibilities
hold for any $g$, $h\in \mathfrak{g}$ and $x$, $y$, $z\in V$:
\begin{eqnarray*}
&\bullet &  f(x,\, x) = 0\\
&\bullet &  x \triangleright [g, \, h] = [x \triangleright g,
\, h] + [g, \, x \triangleright h] \\
&\bullet &  \{x, \, y \} \triangleright g = x \triangleright (y
\triangleright g) - y \triangleright (x \triangleright g) + [g,
\, f(x, \, y)]\\
&\bullet & f(x, \{y, z \}) + f(y, \{z, x \}) + f(z, \{x, y \}) +
x\triangleright f(y, z) + y\triangleright f(z, x) +
z\triangleright f(x, y)=0
\end{eqnarray*}
In this case, the associated unified product $\mathfrak{g}
\,\natural \,_{\Omega(\mathfrak{g}, V)} V = \mathfrak{g}
\#_{\triangleleft}^f \, V $ is the \emph{crossed product} of the
Lie algebras $\mathfrak{g}$ and $V$. A system $( \mathfrak{g}, V,
\triangleright, f)$ consisting of two Lie algebras $\mathfrak{g}$,
$V$ and two bilinear maps $\triangleright : V \times \mathfrak{g}
\to \mathfrak{g}$, $f: V\times V \to \mathfrak{g}$ satisfying the
above four compatibility conditions will be called a \emph{crossed
system of Lie algebras}. The crossed product associated to the
crossed system $(\mathfrak{g}, V, \triangleright, f)$ is the Lie
algebra defined as follow: $\mathfrak{g} \#_{\triangleleft}^f \, V
= \mathfrak{g} \times \, V $ with the bracket given for any $g$,
$h \in \mathfrak{g}$ and $x$, $y \in V$ by:

\begin{equation}\eqlabel{brackcrosspr}
[(g, x), \, (h, y)] := \bigl( [g, \, h] + x \triangleright h -
y\triangleright g + f(x, y), \, \{x, \, y \} \bigl)
\end{equation}

The crossed product of Lie algebras provides the answer to the
following restricted version of the extending structures problem:
\emph{Let $\mathfrak{g}$ be a Lie algebra, $E$ a vector space
containing $\mathfrak{g}$ as a subspace. Describe and classify all
Lie algebra structures on $E$ such that $\mathfrak{g}$ is an ideal
of $E$.}

Indeed, let $(\mathfrak{g}, V, \triangleright, f)$ be a crossed
system of two Lie algebras. Then, $\mathfrak{g} \cong \mathfrak{g}
\times \{0\}$ is an ideal in the Lie algebra $\mathfrak{g}
\#_{\triangleleft}^f \, V$ since $[(g, 0), \, (h, y)] := \bigl(
[g, \, h] - y\triangleright g , \, 0 \bigl)$. Conversely, crossed
products describe all Lie algebra structures on a vector space $E$
such that a given Lie algebra $\mathfrak{g}$ is an ideal of $E$.

\begin{corollary}\colabel{croslieide}
Let $\mathfrak{g}$ be a Lie algebra, $E$ a vector space containing
$\mathfrak{g}$ as a subspace. Then any Lie algebra structure on
$E$ that contains $\mathfrak{g}$ as an ideal is isomorphic to a
crossed product of Lie algebras $\mathfrak{g} \#_{\triangleleft}^f
\, V$.
\end{corollary}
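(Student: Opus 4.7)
The plan is to deduce this directly from \thref{classif}, which already gives an isomorphism with a unified product for any Lie subalgebra $\mathfrak{g} \leq E$; the only remaining work is to check that when $\mathfrak{g}$ is in fact an \emph{ideal}, the extending datum produced by the proof of \thref{classif} degenerates into a crossed system.

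More concretely, I would start by choosing a linear retraction $p : E \to \mathfrak{g}$ with $p|_{\mathfrak{g}} = \mathrm{Id}_{\mathfrak{g}}$ and setting $V := \ker(p)$, exactly as in the proof of \thref{classif}. That theorem produces a Lie extending structure
$$
\bigl(\triangleleft_p, \triangleright_p, f_p, \{-,-\}_p\bigl)
$$
together with an isomorphism $\varphi : \mathfrak{g} \,\natural\, V \to E$, $\varphi(g,x) = g+x$, stabilizing $\mathfrak{g}$ and co-stabilizing $V$. The key observation is then the following: since $\mathfrak{g}$ is an ideal of $(E,[-,-])$, we have $[x,g] \in \mathfrak{g}$ for every $x \in V$ and $g \in \mathfrak{g}$, and therefore $p\bigl([x,g]\bigl) = [x,g]$. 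Plugging this into the formula for $\triangleleft_p$ gives
$$
x \triangleleft_p g \;=\; [x,g] - p\bigl([x,g]\bigl) \;=\; 0,
$$
so the right action $\triangleleft_p$ is the trivial map.

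Once $\triangleleft_p$ is trivial, the axioms $(LE1)$--$(LE7)$ of \thref{1} collapse to exactly the four compatibilities listed above \equref{brackcrosspr}: $(LE2)$ becomes trivial, $(LE4)$ becomes trivial, $(LE7)$ reduces (because the $\triangleleft_p \circ f_p$ terms vanish) to the Jacobi identity for $\{-,-\}_p$, and $(LE3)$, $(LE5)$, $(LE6)$ become the three displayed relations defining a crossed system. In particular $(V,\{-,-\}_p)$ is a Lie algebra, and $(\mathfrak{g}, V, \triangleright_p, f_p)$ is a crossed system of Lie algebras in the sense of this subsection. The bracket \equref{brackunif} then specializes to \equref{brackcrosspr}, so $\mathfrak{g} \,\natural\, V = \mathfrak{g} \#_{\triangleleft}^{f_p} V$, and $\varphi$ is the desired isomorphism.

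There is no real obstacle here: the only point requiring care is verifying that the vanishing of $\triangleleft_p$ alone is enough to force the axiom system $(LE1)$--$(LE7)$ to reduce to the stated crossed-system axioms, which is a direct inspection. The ideal hypothesis enters in exactly one place, namely the computation $x \triangleleft_p g = 0$, and everything else is a matter of rewriting the formulas of \thref{classif} and \thref{1} in this degenerate setting.
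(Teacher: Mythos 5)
Your proposal is correct and follows essentially the same route as the paper: apply \thref{classif} with a linear retraction $p$, observe that the ideal hypothesis forces $p([x,g]) = [x,g]$ and hence $\triangleleft_p = 0$, and conclude that the unified product degenerates to the crossed product. The collapse of $(LE1)$--$(LE7)$ to the crossed-system axioms when $\triangleleft$ is trivial, which you verify explicitly, is exactly what the paper records in the subsection on crossed products, so no gap remains.
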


\begin{proof}
Let $[-,\, -]$ be a Lie algebra structure on $E$ such that
$\mathfrak{g}$ is an ideal in $E$. In particular, $\mathfrak{g}$
is a subalgebra of $E$ and hence we can apply \thref{classif}. In
this case the action $\triangleleft = \triangleleft_p$ of the Lie
extending structure $\Omega(\mathfrak{g}, V) =
\bigl(\triangleleft_p, \, \triangleright_p, \, f_p, \{-, \, -\}_p
\bigl)$ constructed in the proof of \thref{classif} is the trivial
map since for any $x \in V$ and $g \in \mathfrak{g}$ we have that
$[x, g] \in \mathfrak{g}$ and hence $p ([x, g]) = [x, g]$. Thus,
$x \triangleleft_p g = 0$, i.e. the unified product $\mathfrak{g}
\,\natural \,_{\Omega(\mathfrak{g}, V)} V = \mathfrak{g}
\#_{\triangleleft}^f \, V $ is the crossed product of the Lie
algebras $\mathfrak{g}$ and $V:= {\rm Ker}(p)$.
\end{proof}

\begin{remark}\relabel{conecurburi}
We have proved \coref{croslieide} based on \thref{classif} by
taking a linear retraction $p$ of the canonical inclusion $i:
\mathfrak{g} \hookrightarrow E$ and the crossed system arising
from $p$. The classical proof of \coref{croslieide}, given in the
extension theory of Lie algebras, is completely different by the
way the crossed system is constructed: since $\mathfrak{g}$ is an
ideal of the Lie algebra $E$ we can consider the quotient Lie
algebra $\mathfrak{h} := E/\mathfrak{g}$. Let $\pi : E \to
\mathfrak{h}$ be the canonical projection and $s : \mathfrak{h}
\to E$ be a linear section of $\pi$. We define the action
$\triangleright = \triangleright_s$ and the cocycle $f = f_s$
associated to $s$ by the formulas \equref{cro1} and \equref{cro2}
from the introduction. Then, $(\mathfrak{g}, \, \mathfrak{h}, \,
\triangleright = \triangleright_s, \, f = f_s)$ is a crossed
system of Lie algebras and the map $\psi : \mathfrak{g}
\#_{\triangleleft}^f \, \mathfrak{h} \to E$, $\psi (g, x) := g +
s(x)$ is an isomorphism of Lie algebras with the inverse
$\psi^{-1} (z) := (z - s (\pi (z)), \, \pi (z))$, for all $z \in
E$.
\end{remark}

The restricted version of the extending structures problem is in
fact an equivalent reformulation of the (non-abelian) extension
problem. Indeed, first of all we remark that any crossed product
$\mathfrak{g} \#_{\triangleleft}^f \, V$ is an extension of
$\mathfrak{g}$ by $V$ via the following sequence:
\begin{eqnarray*} \eqlabel{extencros}
\xymatrix{ 0 \ar[r] & \mathfrak{g} \ar[r]^{i} & {\mathfrak{g}
\#_{\triangleleft}^f \, V} \ar[r]^{\pi} & V \ar[r] & 0 }
\end{eqnarray*}
where $i : \mathfrak{g} \to \mathfrak{g} \#_{\triangleleft}^f \,
V$, $i (g) := (g, 0)$ and $\pi : \mathfrak{g} \#_{\triangleleft}^f
\, V \to V$, $\pi (g, x) := x$. Conversely, let $\mathfrak{E}$ be
an extension of $\mathfrak{g}$ by $\mathfrak{h}$, that is there
exists an exact sequence of Lie algebras of the form
\equref{extencros0}. By identifying $\mathfrak{g} \cong {\rm Im
(i) = {\rm Ker} (\pi)}$ we view $\mathfrak{g}$ as an ideal of
$\mathfrak{E}$. Then, it follows from \coref{croslieide} that
there exists a crossed system $(\mathfrak{g}, \mathfrak{h},
\triangleright, f)$ of Lie algebras such that $\mathfrak{g}
\#_{\triangleleft}^f \, \mathfrak{h} \cong \mathfrak{E}$, an
isomorphism of Lie algebras. Furthermore, using \thref{classif},
the isomorphism can be chosen such that it stabilizes
$\mathfrak{g}$ and co-stabilizes $\mathfrak{h}$.

\subsection*{The abelian case: cotangent extending structures}
Let $E$ be a vector space, $\mathfrak{g}$ a subspace of $E$ with
the abelian Lie algebra structure. A Lie algebra structure on $E$
containing $\mathfrak{g}$ as an ideal is called a \emph{cotangent
extending structure}. The terminology is a generalization of the
concept introduced in \cite[Section 3.1]{ov}. Let $V$ be a given
complement of $\mathfrak{g}$ in $E$: it follows from
\coref{croslieide} and \thref{classif} that the set of all
cotangent extending structures of the abelian Lie algebra
$\mathfrak{g}$ to the vector space $E$ is parameterized by the set
of all triples $(\triangleright, \, f, \, \{-, \, - \})$, such
that $(V, \{-, \, - \})$ is a Lie algebra, $(\mathfrak{g},
\triangleright)$ is a left $V$-module and $f: V\times V \to
\mathfrak{g}$ is a bilinear map such that $f(x, \, x) = 0$ and
$$
f\bigl(x, \{y,\, z \}\bigl) + f\bigl(y, \{z,\, x \}\bigl) +
f\bigl(z, \{x,\, y \}\bigl) + x \triangleright f(y,  z) + y
\triangleright f(z,  x) + z \triangleright f(x, y) = 0
$$
for all $x$, $y$, $z\in V$. For such a triple $(\triangleright, \,
f, \, \{-, \, - \})$, the bracket of the cotangent extending
structure on $E \cong \mathfrak{g} \times \, V $ is given by:
\begin{equation}\eqlabel{cotangent}
[(g, x), \, (h, y)] := \bigl(x \triangleright h - y\triangleright
g + f(x, y), \, \{x, \, y \} \bigl)
\end{equation}
for all $g$, $h \in \mathfrak{g}$ and $x$, $y \in V$. Moreover,
any cotangent bracket on $E$ has the form \equref{cotangent}.

\subsection*{Bicrossed products and the factorization problem}
Let $\Omega(\mathfrak{g}, V)=\bigl(\triangleleft, \triangleright,
f, \{-, \, -\}\bigl)$ be an extending datum of $\mathfrak{g}$
through $V$ such that $f$ is the trivial map, i.e. $f(x, y) = 0$,
for all $x$, $y\in V$. Then, $\Omega(\mathfrak{g}, V) =
\bigl(\triangleleft, \, \triangleright, \,  \{-, \, -\} \bigl)$ is
a Lie extending structure of $\mathfrak{g}$ through $V$ if and
only if $(V, \, \{-, -\})$ is a Lie algebra and $( \mathfrak{g},
V, \triangleleft, \, \triangleright)$ is a \emph{matched pair of
Lie algebras} as defined in \cite[Theorem 4.1]{majid} and
independently in \cite[Theorem 3.9]{LW}: i.e. $\mathfrak{g}$ is a
left $V$-module under $\triangleright: V \ot \mathfrak{g} \to
\mathfrak{g}$, $V$ is a right $\mathfrak{g}$-module under
$\triangleleft: V \ot \mathfrak{g} \to V$ and the following
compatibilities hold for all $g$, $h \in \mathfrak{g}$, $x$, $y
\in V$:
\begin{eqnarray}
x \triangleright [g, \, h] &=& [x \triangleright g, \,  h] + [g,
\, x \triangleright h] + (x \triangleleft g) \triangleright h - (x
\triangleleft h) \triangleright g \eqlabel{mpLie1} \\
\{x , \, y \} \triangleleft g &=& \{x,\, y \triangleleft g \} +
\{x \triangleleft g, \, y\} + x \triangleleft (y \triangleright g)
- y \triangleleft (x \triangleright g) \eqlabel{mpLie2}
\end{eqnarray}
In this case, the associated unified product $\mathfrak{g}
\,\natural \,_{\Omega(\mathfrak{g}, V)} V = \mathfrak{g} \bowtie V
$ is precisely the \emph{bicrossed product} of the matched pair $(
\mathfrak{g}, V, \triangleleft, \, \triangleright)$ of Lie
algebras. In this paper we adopt the name bicrossed product
established in group theory \cite{Takeuchi} and Hopf algebra
theory \cite{Kassel}. Other names used in the literature for the
above product are: \emph{bicrossproduct} in \cite[Theorem
4.1]{majid}, \emph{double cross sum} in \cite[Proposition
8.3.2]{majid2}, \emph{double Lie algebra} \cite[Definition
3.3]{LW} or \emph{knit product} in \cite{Mic}. Important examples
of bicrossed products of Lie algebras are Manin's triples
\cite[Definition 1.13 and Theorem 1.12]{LW}. The bicrossed product
of two Lie algebras is the construction which provides the answer
for the so-called \emph{factorization problem}, the dual of the
extension problem and is also a special case of the extending
structures problem:

\emph{Let $\mathfrak{g}$ and $\mathfrak{h}$ be two given Lie
algebras. Describe and classify all Lie algebras $\Xi$ that
factorize trough $\mathfrak{g}$ and $\mathfrak{h}$, i.e. $\Xi$
contains $\mathfrak{g}$ and $\mathfrak{h}$ as Lie subalgebras such
that $\Xi = \mathfrak{g} + \mathfrak{h}$ and $\mathfrak{g} \cap
\mathfrak{h} = \{0\}$.}

Now, a Lie algebra $\Xi$ factorizes through $\mathfrak{g}$ and
$\mathfrak{h}$ if and only if there exists a matched pair of Lie
algebras $(\mathfrak{g}, \mathfrak{h}, \, \triangleleft, \,
\triangleright)$ such that $ \Xi \cong \mathfrak{g} \bowtie
\mathfrak{h}$ \cite[Proposition 8.3.2]{majid2} or \cite[Theorem
3.9]{LW}. Thus, the factorization problem can be restated in a
purely computational manner: \emph{Let $\mathfrak{g}$ and
$\mathfrak{h}$ be two given Lie algebras. Describe the set of all
matched pairs $(\mathfrak{g}, \mathfrak{h}, \, \triangleleft, \,
\triangleright)$  and classify up to an isomorphism all bicrossed
products $\mathfrak{g} \bowtie \mathfrak{h}$}.

In the case that $k$ is algebraically closed of characteristic
zero and $\Xi$ is a finite dimensional Lie algebra then the famous
Levi-Malcev theorem \cite[Theorem 5]{bour} proves that there
exists a Lie subalgebra $\mathfrak{h}$ of $\Xi$, called a Levi
subalgebra, such that $\Xi$ factorizes through ${\rm Rad} (\Xi)$
and $\mathfrak{h}$, where ${\rm Rad} (\Xi)$ is the radical of
$\Xi$. Thus, any finite dimensional Lie algebra $\Xi$ is
isomorphic to a bicrossed product between ${\rm Rad} (\Xi)$ and a
semi-simple Lie algebra $\mathfrak{h} \cong \Xi/{\rm Rad} (\Xi)$.

\begin{remark} \relabel{complexproduct}
An interesting equivalent description for the factorization of a
Lie algebra $\Xi$ through two Lie subalgebras is proved in
\cite[Proposition 2.2]{AS}. A linear map $f: \Xi \to \Xi$ is
called a \emph{complex product structure} on $\Xi$
\cite[Definition 2.1]{ABDO} if $f \neq \pm {\rm Id}$, $f^2 = f$
and $f$ is \emph{integrable}, that is for any $x$, $y \in \Xi$ we
have:
$$
f ([x, \, y] ) = [ f(x), \, y] + [x, \, f(y)] - f \bigl( [f(x), \,
f(y)] \bigl)
$$
If the characteristic of $k$ is $\neq 2$, then the linear map $f :
\mathfrak{g} \bowtie \mathfrak{h} \to \mathfrak{g} \bowtie
\mathfrak{h}$, $f (g, h) := (g, - h)$, for all $g \in
\mathfrak{g}$ and $h \in \mathfrak{h}$ is a complex product
structure on any bicrossed product $\mathfrak{g} \bowtie
\mathfrak{h}$ of Lie algebras. Conversely, if $f$ is a complex
product structure on $\Xi$, then $\Xi$ factorizes through two Lie
subalgebras $\Xi = \Xi_{+} + \Xi_{+}$, where $\Xi_{\pm}$ denote
the eigenspaces corresponding to the eigenvalue $\pm 1$ of $f$
\cite[Proposition 2.2]{AS}.
\end{remark}

\section{Flag extending structures. Examples}\selabel{exemple}

\thref{main1} offers the theoretical answer to the extending
structures problem. The next challenge is a computational one: for
a given Lie algebra $\mathfrak{g}$ that is a subspace in a vector
space $E$ with a given complement $V$ to compute explicitly the
classifying object ${\mathcal H}^{2}_{\mathfrak{g}} \, (V, \,
\mathfrak{g} )$ and then to list all Lie algebra structures on $E$
which extend the Lie algebra structure on $\mathfrak{g}$. In what
follows we provide a way of answering this problem for a large
class of such structures.

\begin{definition} \delabel{flagex}
Let $\mathfrak{g}$ be a Lie algebra and $E$ a vector space
containing $\mathfrak{g}$ as a subspace. A Lie algebra structure
on $E$ such that $\mathfrak{g}$ is a Lie subalgebra is called a
\emph{flag extending structure} of $\mathfrak{g}$ to $E$ if there
exists a finite chain of Lie subalgebras of $E$
\begin{equation} \eqlabel{lant}
\mathfrak{g} = E_0 \subset E_1 \subset \cdots \subset E_m = E
\end{equation}
such that $E_i$ has codimension $1$ in $E_{i+1}$, for all $i = 0,
\cdots, m-1$.
\end{definition}

In the context of \deref{flagex} we have that ${\rm dim}_k (V) =
m$, where $V$ is the complement of $\mathfrak{g}$ in $E$. The
existence of such a chain of Lie subalgebras is quite common in
the theory of solvable Lie algebras \cite[Proposition 2]{bour},
\cite[Lie Theorem]{H}. All flag extending structures of
$\mathfrak{g}$ to $E$ can be completely described by a recursive
reasoning where the key step is $m = 1$. More precisely, this key
step describes and classifies all unified products $\mathfrak{g}
\,\natural \, V_1$, for a $1$-dimensional vector space $V_1$. We
will prove that they are parameterized by the space ${\rm TwDer}
(\mathfrak{g})$ of all twisted derivations of $\mathfrak{g}$.
Then, by replacing the initial Lie algebra $\mathfrak{g}$ with
such a unified product $\mathfrak{g} \,\natural \, V$ which can be
described in terms of $\mathfrak{g}$ only, we can iterate the
process: in this way, on our second step we describe and classify
all unified products of the form $(\mathfrak{g} \,\natural \, V_1)
\,\natural \, V_2$, where $V_1$ and $V_2$ are vector spaces of
dimension $1$. Of course, after $m = {\rm dim}_k (V)$ steps, we
obtain the description of all flag extending structures of
$\mathfrak{g}$ to $E$. We start by introducing the following
concept which generalizes the notion of derivation of a Lie
algebra:

\begin{definition}\delabel{lambdaderivariii}
A \emph{twisted derivation} of a Lie algebra $\mathfrak{g}$ is a
pair $(\lambda, D)$ consisting of two linear maps $\lambda :
\mathfrak{g} \to k$ and $D : \mathfrak{g} \to \mathfrak{g}$ such
that for any $g$, $h\in \mathfrak{g}$:
\begin{eqnarray}
\lambda ([g, \, h]) &=& 0 \eqlabel{lamdaderiv0}\\
D ([g, \, h]) &=& [D(g), \, h] + [g, \, D(h)] + \lambda(g) D (h) -
\lambda(h) D(g)\eqlabel{lambderivari}
\end{eqnarray}
\end{definition}

The set of all twisted derivations of $\mathfrak{g}$ will be
denoted by ${\rm TwDer} (\mathfrak{g})$. The compatibility
condition \equref{lamdaderiv0} is equivalent to $\mathfrak{g}'
\subseteq {\rm Ker} (\lambda)$, where $\mathfrak{g}'$ is the
derived algebra of $\mathfrak{g}$.

\begin{examples} \exlabel{exemplederviciud}
1. ${\rm TwDer} (\mathfrak{g})$ contains the usual space of
derivations ${\rm Der} (\mathfrak{g})$ via the canonical embedding
$$
{\rm Der} (\mathfrak{g}) \hookrightarrow {\rm TwDer}
(\mathfrak{g}), \qquad D \mapsto (0, D)
$$
which is an isomorphism if $\mathfrak{g}$ is a perfect Lie
algebra.

2. Let $g_0 \in \mathfrak{g}$ and $\lambda : \mathfrak{g} \to k$
be a $k$-linear map such that $\mathfrak{g}' \subseteq {\rm Ker}
(\lambda)$. We define the map
$$
D_{g_0, \lambda} : \mathfrak{g} \to \mathfrak{g}, \quad D_{g_0,
\lambda} (h) := [g_0, \, h] - \lambda(h) g_0
$$
for all $h \in \mathfrak{g}$. Then $(\lambda, D_{g_0, \lambda} )$
is a twisted derivation called a \emph{inner twisted derivation}.
\end{examples}

We shall prove now that the set of all Lie extending structures
${\mathcal L} \, (\mathfrak{g}, V)$ of a Lie algebra
$\mathfrak{g}$ through a $1$-dimensional vector space $V$ is
parameterized by ${\rm TwDer} (\mathfrak{g})$.

\begin{proposition}\prlabel{unifdim1}
Let $\mathfrak{g}$ be a Lie algebra and $V$ a vector space of
dimension $1$ with a basis $\{x\}$. Then there exists a bijection
between the set ${\mathcal L} \, (\mathfrak{g}, V)$ of all Lie
extending structures of $\mathfrak{g}$ trough $V$ and the space
${\rm TwDer} (\mathfrak{g})$ of all twisted derivations of
$\mathfrak{g}$. Through the above bijection, the Lie extending
structure $\Omega(\mathfrak{g}, V)  = \bigl(\triangleleft, \,
\triangleright, f, \{-, -\} \bigl)$ corresponding to $(\lambda, D)
\in {\rm TwDer} (\mathfrak{g})$ is given by:
\begin{equation}\eqlabel{extenddim1}
x \triangleleft g = \lambda (g) x, \quad x \triangleright g =
D(g), \quad f = 0, \quad \{-, \, -\} = 0
\end{equation}
for all $g \in \mathfrak{g}$. The unified product associated to
the Lie extending structure \equref{extenddim1} will be denoted by
$\mathfrak{g} \,\natural \,_{(\lambda, \, D)} V$ and has the
bracket given for any $g$, $h\in \mathfrak{g}$ by:\footnote{As
usually, we define the bracket only in the points where the values
are non-zero.}
\begin{equation}\eqlabel{extenddim20}
[(g, 0), \, (h, 0)] = ([g, h], \, 0), \quad [(g, 0), \, (0, x)] =
- ( D(g),  \, \lambda(g) x)
\end{equation}
\end{proposition}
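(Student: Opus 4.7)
The plan is to exploit the one-dimensionality of $V$ to reduce each of the four structure maps in an extending datum to a single piece of data on $\mathfrak{g}$, then show that the axioms $(LE1)$--$(LE7)$ collapse to exactly the twisted-derivation conditions \equref{lamdaderiv0} and \equref{lambderivari}.

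First I would parameterize the extending data. Since $V = kx$, every bilinear map out of $V \times (-)$ is determined by its restriction to $(x, -)$. So $\triangleleft$ is encoded by a linear map $\lambda: \mathfrak{g} \to k$ via $x \triangleleft g = \lambda(g) x$, and $\triangleright$ is encoded by a linear map $D: \mathfrak{g} \to \mathfrak{g}$ via $x \triangleright g = D(g)$. The maps $f: V \times V \to \mathfrak{g}$ and $\{-,-\}: V \times V \to V$ are both bilinear, so each is determined by its value on $(x,x)$; but axiom $(LE1)$ forces $f(x,x) = 0$ and $\{x,x\} = 0$, so necessarily $f = 0$ and $\{-,-\} = 0$. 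Thus the assignment $\Omega(\mathfrak{g},V) \mapsto (\lambda, D)$ is a well-defined injection into $\Hom_k(\mathfrak{g}, k) \times \End_k(\mathfrak{g})$, and the content of the proposition is to identify its image with ${\rm TwDer}(\mathfrak{g})$.

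Next I would run through $(LE1)$--$(LE7)$ one by one. Axiom $(LE1)$ holds by construction. Axiom $(LE2)$, which says $(V, \triangleleft)$ is a right $\mathfrak{g}$-module, evaluated on $x$ yields $\lambda([g,h])x = \lambda(g)\lambda(h)x - \lambda(h)\lambda(g)x = 0$, which is precisely \equref{lamdaderiv0}. Axiom $(LE3)$ applied to $x$ gives $D([g,h]) = [D(g),h] + [g,D(h)] + \lambda(g)D(h) - \lambda(h)D(g)$, which is \equref{lambderivari}. The remaining axioms $(LE4)$--$(LE7)$ must be tested only on tuples of basis vectors, i.e.\ with $y = x$ and $z = x$; since $f$ and $\{-,-\}$ vanish identically and $x \triangleleft \lambda(g)x$-type expressions in $(LE4)$--$(LE5)$ cancel by antisymmetry in the pair of $V$-arguments, every one of these identities reduces to $0 = 0$. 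This gives the bijection ${\mathcal L}(\mathfrak{g},V) \cong {\rm TwDer}(\mathfrak{g})$.

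Finally, to obtain the bracket \equref{extenddim20} on $\mathfrak{g} \,\natural\,_{(\lambda,D)} V$, I would just substitute the data \equref{extenddim1} into the general formula \equref{brackunif}: the first equation is the Lie bracket of $\mathfrak{g}$, and the second is computed from $[(g,0),(0,x)] = ( - x \triangleright g, - x \triangleleft g) = (-D(g), -\lambda(g)x)$. The only step requiring any care is the case analysis of $(LE4)$--$(LE7)$ in the one-dimensional setting; that is the main (and essentially the sole) obstacle, but it is straightforward bookkeeping rather than a genuine difficulty, because each axiom becomes antisymmetric in two $V$-slots that are forced to both equal $x$.
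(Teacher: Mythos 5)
Your proposal is correct and follows essentially the same route as the paper's proof: use $\dim_k V = 1$ together with $(LE1)$ to force $f = 0$ and $\{-,-\} = 0$ and to encode $\triangleleft$, $\triangleright$ by $\lambda$ and $D$, observe that $(LE4)$--$(LE7)$ become vacuous, and identify $(LE2)$ and $(LE3)$ with \equref{lamdaderiv0} and \equref{lambderivari}, after which \equref{extenddim20} is direct substitution into \equref{brackunif}. No gaps.
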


\begin{proof}
We have to compute all bilinear maps $ \triangleleft : V \times
\mathfrak{g} \to \mathfrak{g}$, $\triangleright : V \times
\mathfrak{g} \to V$, $f: V\times V \to \mathfrak{g}$ and $\{-, \,
-\} : V\times V \to V$ satisfying the compatibility conditions
$(LE1)-(LE7)$ of \thref{1}. Since, $f: V \times V \to
\mathfrak{g}$ and $\{-, \, -\} : V \times V \to V$ are bilinear
maps and $V$ has dimension $1$, the axiom $(LE1)$ is equivalent to
the fact that both maps are trivial, i.e. $f = 0$ and $\{-, \, -\}
= 0$. Again by the fact that ${\rm dim}_k (V) = 1$, we obtain a
bijection between the set of all bilinear maps $\triangleleft : V
\times \mathfrak{g} \to V$ and the set of all linear maps $\lambda
: \mathfrak{g} \to k$ and the bijection is given such that the
action $\triangleleft : V \times \mathfrak{g} \to V$ associated to
$\lambda$ is given by the formula: $ x \triangleleft g := \lambda
(g) x$, for all $g\in \mathfrak{g}$. Analogous, any bilinear map
$\triangleright : V\times \mathfrak{g} \to \mathfrak{g}$ is
uniquely implemented by a linear map $D = D_{\triangleright} :
\mathfrak{g} \to \mathfrak{g}$ via the formula: $x \triangleright
g := D(g)$, for all  $g\in \mathfrak{g}$.

Finally, we are left to deal with the compatibilities
$(LE2)-(LE7)$. Since, ${\rm dim}_k (V) = 1$, $f = 0$, $\{-, \, -\}
= 0$, we observe that the compatibilities $(LE4)-(LE7)$ are
automatically satisfied. Now, the compatibility condition $(LE2)$
is equivalent to $\lambda ([g, \, h]) = 0$, for all $g$, $h \in
\mathfrak{g}$. Finally, the compatibility condition $(LE3)$ is
equivalent to the fact that \equref{lambderivari} holds and the
proof is now finished.
\end{proof}

\begin{remark}\relabel{unifbicross}
Since the cocycle $f$ in \equref{extenddim1} is trivial we obtain
that $\mathfrak{g} \,\natural \,_{(\lambda, \, D)} V =
\mathfrak{g} \bowtie V$, where $\mathfrak{g} \bowtie V$ is a
bicrossed product between $\mathfrak{g}$ and an abelian Lie
algebra of dimension $1$. Hence, \prref{unifdim1} shows in fact
that any unified product $\mathfrak{g} \,\natural \,_{(\lambda, \,
D)} V$ between an arbitrary Lie algebra $\mathfrak{g}$ and a
$1$-dimensional vector space $V$ is isomorphic to a bicrossed
product $\mathfrak{g} \bowtie V$ between $\mathfrak{g}$ and the
abelian Lie algebra of dimension $1$.
\end{remark}

Next, we classify all Lie algebras $\mathfrak{g} \,\natural
\,_{(\lambda, \, D)} V$ by computing the cohomological objects
${\mathcal H}^{2}_{\mathfrak{g}} (V, \mathfrak{g})$ and ${\mathcal
H}^{2} \, (V, \mathfrak{g})$. First we need the following:

\begin{definition} \delabel{echivtwderivari}
Two twisted derivations $(\lambda, D)$ and $(\lambda', D') \in
{\rm TwDer} (\mathfrak{g})$ are called \emph{equivalent} and we
denote this by $(\lambda, D) \equiv (\lambda', D')$ if $\lambda =
\lambda'$ and there exists a pair $(g_0, q) \in \mathfrak{g}
\times k^*$ such that for any $h \in \mathfrak{g}$ we have:
\begin{equation}\eqlabel{echivtwder}
D(h) = q D' (h) + [g_0, \, h] - \lambda (h) g_0
\end{equation}
\end{definition}

Hence, $(\lambda, D) \equiv (\lambda', D')$ if and only if
$\lambda = \lambda'$ and there exists a non-zero scalar $q\in k$
such that $D - q D'$ is a inner twisted derivation. We provide
below the first explicit classification result of the extending
structures problem for Lie algebras. This is also the key step in
the classification of all flag extending structures.

\begin{theorem}\thlabel{clasdim1}
Let $\mathfrak{g}$ be a Lie algebra of codimension $1$ in the
vector space $E$ and $V$ a complement of $\mathfrak{g}$ in $E$.
Then:

$(1)$ $\equiv$ is an equivalence relation on the set ${\rm TwDer}
(\mathfrak{g})$ of all twisted derivations of $\mathfrak{g}$.

$(2)$ ${\rm Extd} \, (E, \mathfrak{g}) \cong {\mathcal
H}^{2}_{\mathfrak{g}} (V, \mathfrak{g} ) \cong {\rm TwDer}
(\mathfrak{g})/\equiv $. The bijection between ${\rm TwDer}
(\mathfrak{g})/\equiv$ and ${\rm Extd} \, (E, \mathfrak{g})$, the
isomorphisms classes of all Lie algebras structures on $E$ that
stabilizes $\mathfrak{g}$, is given by:
$$
\overline{(\lambda, \, D)} \mapsto \mathfrak{g} \,\natural
\,_{(\lambda, \, D)} V
$$
where $\overline{(\lambda, \, D)}$ is the equivalence class of
$(\lambda, \, D)$ via the relation $\equiv$ and $\mathfrak{g}
\,\natural \,_{(\lambda, \, D)} V$ is the Lie algebra constructed
in \equref{extenddim20}.

$(3)$ ${\mathcal H}^{2} \, (V, \mathfrak{g} ) \cong {\rm TwDer}
(\mathfrak{g})/ \approx $, where $\approx$ is the following
relation: $(\lambda, D) \approx (\lambda', D')$ if and only if
$\lambda = \lambda'$ and $D - D'$ is a inner twisted derivation of
$\mathfrak{g}$.
\end{theorem}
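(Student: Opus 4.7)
The plan is to reduce everything, via \prref{unifdim1}, to a direct translation of the equivalence relations $\equiv$ (from \deref{echiaa}) and $\approx$ (from \reref{aldoileaob}) on $\mathcal{L}(\mathfrak{g},V)$ into relations on ${\rm TwDer}(\mathfrak{g})$, using the fact that $\dim_k V = 1$. Since $V = k\cdot x$, any $v \in {\rm Aut}_k(V)$ is given by $v(x) = qx$ for a unique $q \in k^*$, and any linear map $r : V \to \mathfrak{g}$ is determined by a unique $g_0 := r(x) \in \mathfrak{g}$. Thus the pairs $(r,v)$ used in \deref{echiaa} to compare Lie extending structures are parameterized by pairs $(g_0, q) \in \mathfrak{g} \times k^*$, and this is exactly the data appearing in \deref{echivtwderivari}.

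For part $(2)$, I start with two Lie extending structures coming from twisted derivations $(\lambda, D)$ and $(\lambda', D')$, whose bilinear data is given by \equref{extenddim1}. Substituting $v(x) = qx$ and $r(x) = g_0$ into the first two identities of \deref{echiaa} yields: the formula for $\triangleleft'$ becomes $\lambda(g)x = \lambda'(g)x$, forcing $\lambda = \lambda'$; and the formula for $\triangleright'$ becomes, after multiplying by $q$,
\begin{equation*}
qD'(g) = D(g) + \lambda(g)g_0 + [g, g_0] = D(g) - [g_0,g] + \lambda(g)g_0,
\end{equation*}
which is exactly \equref{echivtwder}. The remaining two identities of \deref{echiaa} for $f'$ and $\{-,-\}'$ are automatically satisfied because $f = f' = 0$ and $\{-,-\} = \{-,-\}' = 0$ by \prref{unifdim1}, and a direct inspection (using that $V$ is one-dimensional so $\{v^{-1}(x), v^{-1}(x)\} = 0$ and all terms pair up and cancel) shows both sides vanish. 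This establishes a bijection $\mathcal{L}(\mathfrak{g},V)/\!\equiv\; \longleftrightarrow\; {\rm TwDer}(\mathfrak{g})/\!\equiv$, and combined with \thref{main1} and \prref{unifdim1} gives the desired chain of bijections with ${\rm Extd}\,(E,\mathfrak{g})$.

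Part $(1)$ then comes for free: since $\equiv$ on $\mathcal{L}(\mathfrak{g}, V)$ is already known (from \thref{main1}) to be an equivalence relation, its transport under the bijection of \prref{unifdim1} is again an equivalence relation on ${\rm TwDer}(\mathfrak{g})$. Alternatively it can be verified directly from \deref{echivtwderivari} by noting that the set $\{\, h \mapsto [g_0,h] - \lambda(h)g_0 \mid g_0 \in \mathfrak{g}\,\}$ of inner twisted derivations (with fixed $\lambda$) is closed under addition and negation, and scaling by $k^*$ is a group, so reflexivity (take $q=1$, $g_0=0$), symmetry and transitivity all follow.

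Part $(3)$ is the same argument with the single modification that $\approx$ on $\mathcal{L}(\mathfrak{g},V)$ forces $v = {\rm Id}_V$ by \leref{morfismuni}, i.e.\ $q = 1$. The displayed relation of \reref{aldoileaob} for $\triangleleft'$ and $\triangleright'$ then reduces to $\lambda = \lambda'$ and $D(g) = D'(g) + [g_0,g] - \lambda(g)g_0$, which says precisely that $D - D'$ is the inner twisted derivation $D_{g_0,\lambda}$ of \exref{exemplederviciud}. The only minor obstacle is the bookkeeping in the $\triangleright'$ formula (the signs and the distinction between $[g,g_0]$ and $-[g_0,g]$), and verifying that the $f'$ and $\{-,-\}'$ equations collapse to $0 = 0$ in the one-dimensional setting; beyond that the argument is a routine translation through the bijection of \prref{unifdim1}.
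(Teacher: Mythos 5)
Your proposal is correct and follows essentially the same route as the paper: reduce via \prref{unifdim1} to the one-dimensional parameterization $(r,v)\leftrightarrow(g_0,q)\in\mathfrak{g}\times k^*$, observe that the cocycle and bracket conditions are automatically satisfied since $f=f'=0$ and $\{-,-\}=\{-,-\}'=0$, and identify the surviving conditions with \equref{echivtwder} (resp.\ with $q=1$ for $\approx$), then invoke \thref{main1}. The only cosmetic difference is that you substitute into the explicit formulas of \deref{echiaa} and \reref{aldoileaob}, whereas the paper checks the equivalent conditions $(ML1)$--$(ML4)$ of \leref{morfismuni} directly; the computation is the same.
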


\begin{proof} Let $(\lambda, D)$, $(\lambda', D') \in
{\rm TwDer} (\mathfrak{g})$ be two twisted derivations of
$\mathfrak{g}$ and $\Omega(\mathfrak{g}, V) = \bigl(\triangleleft,
\, \triangleright, f, \{-, -\} \bigl)$ respectively $\Omega'
(\mathfrak{g}, V) = \bigl(\triangleleft', \, \triangleright', f',
\{-, -\}' \bigl)$ the corresponding Lie extending structure
constructed in \equref{extenddim1}. We will prove that $(\lambda,
D) \equiv (\lambda', D')$ if and only if there exists an
isomorphism of Lie algebras $\mathfrak{g} \,\natural \,_{(\lambda,
\, D)} V \cong \mathfrak{g} \,\natural \,_{(\lambda', \, D')} V$
that stabilizes $\mathfrak{g}$. This observation together with
\prref{unifdim1} and \thref{main1} will finish the proof.

Indeed, using \leref{morfismuni} we obtain that there exists an
isomorphism $\mathfrak{g} \,\natural \,_{(\lambda, \, D)} V \cong
\mathfrak{g} \,\natural \,_{(\lambda', \, D')} V$ of Lie algebras
which stabilizes $\mathfrak{g}$ if and only if there exists a pair
$(r, v)$, where $r: V \to \mathfrak{g}$ and $v: V \to V$ are
linear maps satisfying the compatibility conditions $(ML1) -
(ML4)$ and $v$ is bijective. Since ${\rm dim}_k (V) = 1$, any
linear map $r: V \to \mathfrak{g}$ is uniquely determined by an
element $g_0 \in \mathfrak{g}$ such that $r(x) = g_0$, where
$\{x\}$ is a basis in $V$. On the other hand, any automorphism $v$
of $V$ is uniquely determined by a non-zero scalar $q\in k^*$ such
such $v (x) = q x$. It remains to check the compatibility
conditions $(ML1) - (ML4)$, for this pair of maps $(r = r_{g_0}, v
= v_q)$. Since $f = f ' = 0$ and $ \{-, \, - \} = \{-, \, - \}' =
0$ in the corresponding Lie extending structure, we obtain that
the compatibility conditions $(ML3)$ and $(ML4)$ are trivially
fulfilled. Now, the compatibility condition $(ML1)$ is equivalent
to $q \lambda' (g) x = q \lambda (g) x$, for all $g \in
\mathfrak{g}$, i.e. to the fact that $\lambda = \lambda'$, since
$q \neq 0$. Finally, the compatibility condition $(ML1)$ takes the
following equivalent form:
$$
\lambda (g) g_0 = [g_0, \, g] - D(g) + q D' (g)
$$
for all $g\in \mathfrak{g}$, which is precisely
\equref{echivtwder} from \deref{echivtwderivari}. Thus, using
\leref{morfismuni} we have proved that $\mathfrak{g} \,\natural
\,_{(\lambda, \, D)} V \cong \mathfrak{g} \,\natural
\,_{(\lambda', \, D')} V$ (an isomorphism of Lie algebras that
stabilizes $\mathfrak{g}$) if and only if $(\lambda, D) \equiv
(\lambda', D')$ and the proof is finished.
\end{proof}

\thref{clasdim1} takes the following simplified form in the case
of perfect Lie algebras.

\begin{corollary}\colabel{clasdim1perf}
Let $\mathfrak{g}$ be a perfect Lie algebra of codimension $1$ in
the vector space $E$ and $V$ a complement of $\mathfrak{g}$ in
$E$. Then:

$(1)$ ${\rm Extd} \, (E, \mathfrak{g}) \cong  {\mathcal
H}^{2}_{\mathfrak{g}} (V, \mathfrak{g} ) \cong {\rm Der}
(\mathfrak{g})/\approx$, where $\approx$ is the equivalence
relation on ${\rm Der} (\mathfrak{g})$ defined by: $D \approx D'$
if and only if there exists $q\in k^*$ such that $D - q D'$ is an
inner derivation of $\mathfrak{g}$. The bijection between ${\rm
Der} (\mathfrak{g})/\approx$ and ${\rm Extd} \, (E, \mathfrak{g})$
is given by
$$
\overline{D} \mapsto \mathfrak{g} \,\natural \,_D V
$$
where $\overline{D}$ is the equivalence class of $D$ via the
relation $\approx$ and $\mathfrak{g} \,\natural \,_D V$ is the Lie
algebra constructed in \equref{extenddim20} for $\lambda = 0$.

$(2)$ ${\mathcal H}^{2} \, (V, \mathfrak{g} ) \cong {\rm Out}
(\mathfrak{g})$.
\end{corollary}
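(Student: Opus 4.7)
The plan is to deduce the corollary directly from \thref{clasdim1} by specializing every ingredient to the perfect case. The starting observation is the one already recorded in \exref{exemplederviciud}(1): since $\mathfrak{g}' \subseteq \Ker(\lambda)$ is forced by \equref{lamdaderiv0}, whenever $\mathfrak{g}' = \mathfrak{g}$ the linear form $\lambda$ must vanish identically. Hence the canonical embedding $D \mapsto (0, D)$ provides a bijection ${\rm Der}(\mathfrak{g}) \cong {\rm TwDer}(\mathfrak{g})$, and under this identification $(0, D)$ is an inner twisted derivation in the sense of \exref{exemplederviciud}(2) if and only if $D$ is an inner derivation of $\mathfrak{g}$ in the usual sense, because $D_{g_0, 0}(h) = [g_0, h]$.

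Given this, I would transport the two equivalence relations from \thref{clasdim1} across this bijection. For part (1), the relation $\equiv$ on ${\rm TwDer}(\mathfrak{g})$ becomes: $D \equiv D'$ iff there exists $q \in k^*$ and $g_0 \in \mathfrak{g}$ such that $D(h) = q D'(h) + [g_0, h]$ for all $h$, i.e. iff $D - qD'$ is inner. This is exactly the relation $\approx$ introduced in the statement of \coref{clasdim1perf}(1). Combining this identification with \thref{clasdim1}(2) yields the desired bijection $\overline{D} \mapsto \mathfrak{g} \,\natural\,_D V$, where the unified product is the one in \equref{extenddim20} with $\lambda = 0$. For part (2), \thref{clasdim1}(3) says $(\lambda, D) \approx (\lambda', D')$ iff $\lambda = \lambda'$ and $D - D'$ is inner twisted; in the perfect case both $\lambda$'s are $0$ and an inner twisted derivation with $\lambda = 0$ is an inner derivation, so $\approx$ becomes the congruence modulo $\Inn(\mathfrak{g})$, giving ${\mathcal H}^{2}(V, \mathfrak{g}) \cong \Der(\mathfrak{g})/\Inn(\mathfrak{g}) = \Out(\mathfrak{g})$.

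None of the steps look like real obstacles: the only content is the observation that perfectness kills $\lambda$, after which everything reduces to a routine rewriting of the two equivalence relations. The mildest point to be careful about is checking that no twisted derivation with $\lambda \neq 0$ is accidentally lost; this is why I would make the argument $\mathfrak{g} = \mathfrak{g}' \subseteq \Ker(\lambda)$ explicit at the very beginning, so that every subsequent statement is automatically about pairs of the form $(0, D)$ and the corollary follows as a clean specialization of \thref{clasdim1}.
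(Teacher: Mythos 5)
Your proposal is correct and follows exactly the paper's own route: the paper proves \coref{clasdim1perf} by invoking \thref{clasdim1} together with the observation of \exref{exemplederviciud} that perfectness forces $\lambda = 0$, so that ${\rm TwDer}(\mathfrak{g}) = \{0\} \times {\rm Der}(\mathfrak{g})$, after which the two equivalence relations specialize precisely as you describe. Your write-up simply makes explicit the routine transport of the relations that the paper leaves to the reader.
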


\begin{proof}
Follows directly from \thref{clasdim1} using
\exref{exemplederviciud}: for a perfect Lie algebra
$\mathfrak{g}$, we have that ${\rm TwDer} (\mathfrak{g}) = \{0\}
\times {\rm Der} (\mathfrak{g})$.
\end{proof}

\begin{remark} \relabel{derivariexter}
We recall that the space of outer derivations of a Lie algebra
$\mathfrak{g}$ is the quotient vector space ${\rm Out}
(\mathfrak{g}) := {\rm Der} (\mathfrak{g}) / {\rm
Inn}(\mathfrak{g})$. Thus, by definition, two derivations $D$, $D'
\in {\rm Der} (\mathfrak{g})$ are congruent and we denote this by
$D \sim D'$ if $D - D' \in {\rm Inn}(\mathfrak{g} )$ and then the
space ${\rm Out} (\mathfrak{g})$ is defined as the quotient via
this congruence relation, i.e. ${\rm Der} (\mathfrak{g}) /\sim $.
Now, two congruent derivations are equivalent in the sense of
\coref{clasdim1perf} but the converse does not hold. This means
that there exists a canonical surjection
$$
{\rm Out} (\mathfrak{g}) \twoheadrightarrow {\rm Der}
(\mathfrak{g})/\approx
$$
We also note that the classifying object ${\rm Der}
(\mathfrak{g})/\approx$ is just a pointed set as it does not carry
a group structure: it is straightforward to see that the following
possible group structure $\overline{D} + \overline{D'} =
\overline{D + D'}$ is not well defined on ${\rm Der}
(\mathfrak{g})/\approx$. However, this is not at all surprising
from the the point of view of non-abelian extension theory: the
classifying object is not a group anymore but a pointed set.
\end{remark}

Now, we indicate a class of Lie algebras $\mathfrak{g}$ such that
the classifying objects of \coref{clasdim1perf} are both
singletons. This class contains the semisimple Lie algebras or,
more generally, the sympathetic Lie algebras \cite{ben}. In
particular, it shows that for a semisimple Lie algebra
$\mathfrak{g}$ there exists, up to an isomorphism that stabilizes
$\mathfrak{g}$, a unique Lie algebra structure on a vector space
of dimension $1 + {\rm dim}_k (\mathfrak{g})$ which extends the
one on $\mathfrak{g}$: more precisely, this unique Lie algebra
structure is given by the direct product $\mathfrak{g} \times V$,
between $\mathfrak{g}$ and an abelian Lie algebra of dimension
$1$.

\begin{corollary}\colabel{clasdim1dihai}
Let $\mathfrak{g}$ be a Lie algebra of codimension $1$ in the
vector space $E$ and $V$ a complement of $\mathfrak{g}$ in $E$.
Assume that $\mathfrak{g}$ is perfect and ${\rm Der}
(\mathfrak{g}) = {\rm Inn} (\mathfrak{g})$. Then ${\mathcal
H}^{2}_{\mathfrak{g}} (V, \mathfrak{g} ) = {\mathcal H}^{2} (V,
\mathfrak{g} ) = 0$.
\end{corollary}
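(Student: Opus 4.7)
The plan is to derive this corollary as an immediate consequence of \coref{clasdim1perf} together with the surjection described in \reref{derivariexter}. Since $\mathfrak{g}$ is assumed perfect, \coref{clasdim1perf} applies and gives the two identifications
\[
{\mathcal H}^{2}_{\mathfrak{g}} (V, \mathfrak{g} ) \cong {\rm Der} (\mathfrak{g})/\approx, \qquad {\mathcal H}^{2} \, (V, \mathfrak{g} ) \cong {\rm Out} (\mathfrak{g}).
\]
So the task reduces entirely to showing that both quotients are singletons under the hypothesis ${\rm Der}(\mathfrak{g}) = {\rm Inn}(\mathfrak{g})$.

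First I would handle ${\mathcal H}^{2}(V, \mathfrak{g})$: by definition ${\rm Out}(\mathfrak{g}) = {\rm Der}(\mathfrak{g})/{\rm Inn}(\mathfrak{g})$, and the hypothesis collapses this quotient to $0$, giving ${\mathcal H}^{2}(V, \mathfrak{g}) = 0$ at once. For ${\mathcal H}^{2}_{\mathfrak{g}}(V, \mathfrak{g})$, the cleanest route is to invoke the canonical surjection ${\rm Out}(\mathfrak{g}) \twoheadrightarrow {\rm Der}(\mathfrak{g})/\approx$ of \reref{derivariexter}: since the source is trivial, so is the target. Alternatively, one argues directly from \deref{echivtwderivari}/\coref{clasdim1perf}: for any two derivations $D, D'$, choose $q = 1$; then $D - D'$ is again a derivation, hence inner by hypothesis, so $D \approx D'$ and the equivalence relation has a single class.

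There is really no obstacle here — the content of the corollary is entirely carried by the preceding classification. The only point worth stating carefully is that ${\rm Inn}(\mathfrak{g})$ is a linear subspace of ${\rm Der}(\mathfrak{g})$, which is what allows the difference $D - qD'$ of any two derivations to remain inner under the given assumption, ensuring that the pointed set ${\rm Der}(\mathfrak{g})/\approx$ collapses.
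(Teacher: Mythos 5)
Your proposal is correct and follows essentially the same route as the paper: invoke \coref{clasdim1perf}, note that the hypothesis ${\rm Der}(\mathfrak{g}) = {\rm Inn}(\mathfrak{g})$ forces ${\rm Out}(\mathfrak{g}) = 0$, and conclude that ${\rm Der}(\mathfrak{g})/\approx$ is trivial as well (the paper phrases this as being a quotient of a null space, i.e.\ exactly the surjection of \reref{derivariexter} that you cite). Your additional direct check with $q=1$ is a harmless, correct alternative for the second classifying object.
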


\begin{proof} We apply \coref{clasdim1perf}: since
${\rm Der} (\mathfrak{g}) = {\rm Inn} (\mathfrak{g})$ we obtain
that the space of outer derivations ${\rm Out} (\mathfrak{g}) =
0$, hence, so is ${\rm Der} (\mathfrak{g})/\approx$, being a
quotient of a null space.
\end{proof}

Next we provide two explicit examples for the above results by
computing ${\mathcal H}^{2}_{\mathfrak{g}} (V, \mathfrak{g} )$ and
then describing all Lie algebra structures which extend the Lie
algebra structure from $\mathfrak{g}$ to a vector space of
dimension $1 + {\rm dim}_k (\mathfrak{g})$. The detailed
computations are rather long but straightforward and can be
provided upon request. We start with the case when $\mathfrak{g}$
is a perfect Lie algebra which is not semisimple.

\begin{example} \exlabel{ultimulexperfect}
Let $k$ be a field of characteristic zero and $\mathfrak{g}$ be
the perfect $5$-dimensional Lie algebra with a basis $\{e_{1},
e_{2}, e_{3}, e_{4}, e_{5}\}$ and bracket given by:
\begin{eqnarray*}
[e_{1}, \, e_{2}] = e_{3}, \quad [e_{1}, \, e_{3}] = -2e_{1},
\quad [e_{1}, \, e_{5}] = [e_{3}, \, e_{4}] = e_{4}\\
\left[ e_{2}, \, e_{3} \right] = 2e_{2}, \quad \left[e_{2}, \,
e_{4} \right] = e_{5}, \quad \left[e_{3}, \, e_{5} \right] = -
e_{5}
\end{eqnarray*}
We shall compute the classifying object ${\rm Extd} \, (k^6,
\mathfrak{g})$ by proving that $ {\rm Extd} \, (k^6, \mathfrak{g})
\cong k^7/ \equiv$, where $\equiv$ is the equivalence relation on
$k^7$ defined by: $(a_1, \cdots, a_7) \equiv (a'_1, \cdots, a'_7)$
if and only if there exists $q \in k^{*}$ such that $a_{2} = q
a^{'}_{2}$ and $2 a_{7} - a_{1} = q (2 a^{'}_{7} - a^{'}_{1})$.

Indeed, by a rather long but straightforward computation it can be
proved that the space of derivations ${\rm Der}(\mathfrak{g})$
coincides with the space of all matrices from $\mathcal{M}_{5}(k)$
of the form:
\begin{eqnarray*}
A = \left( \begin{array}{ccccc} a_{1} & 0 & a_{6} & 0 & 0\\
0 & -a_{1} & -2a_{2} & 0 & 0\\
a_{2} & a_{4} & 0 & 0 & 0\\
a_{3} & 0 & a_{5} & a_{7} & a_{4}\\
0 & a_{5} & -a_{3} & a_{2} & (a_{7}-a_{1}) \end{array}\right)
\end{eqnarray*}
for all $a_1, \cdots, a_7 \in k$. Thus, any $6$-dimensional Lie
algebra that contains $\mathfrak{g}$ as a Lie subalgebra is
isomorphic to one of the following seven parameter Lie algebra
denoted by $\mathfrak{g}_{(a_1, \cdots, a_7)} (x) := \mathfrak{g}
\,\natural \, V $, which has the basis $\{e_{1}, e_{2}, e_{3},
e_{4}, e_{5}, x\}$ and bracket given by:
$$
[e_1, \, x] = - a_1 e_1 - a_2 e_3 - a_3 e_4, \quad [e_2, \, x] =
a_{1}e_{2} - a_4 e_3 - a_5 e_5
$$
$$
[e_3, \, x] = - a_6 e_1 + 2a_2 e_2 - a_5 e_4 + a_3 e_5, \,\, [e_4,
\, x] = - a_7 e_4 - a_2 e_5, \,\, [e_5, \, x] = - a_4 e_4 + (a_1 -
a_7) e_5
$$
for some scalars $a_1, \cdots, a_7 \in k$. Two such Lie algebras
$\mathfrak{g}_{(a_1, \cdots, a_7)} (x)$ and $\mathfrak{g}_{(a'_1,
\cdots, a'_7)} (x)$ are equivalent in the sense of
\deref{echivextedn} if and only if there exists $q \in k^{*}$ such
that $a_{2} = q a^{'}_{2}$ and $2 a_{7} - a_{1} = q (2 a^{'}_{7} -
a^{'}_{1})$, as needed.
\end{example}

Finally, we give an example in the case when $\mathfrak{g}$ is not
perfect.

\begin{example}\exlabel{cazulneperfect}
Let $k$ be a field of characteristic zero and $\textsf{gl}(2, k)$
the Lie algebra of all $2 \times 2$ matrices over $k$ with the
usual Lie bracket defined by:
\begin{eqnarray}
[e_{ij}, \, e_{kl}] = \delta_{jk} e_{il} - \delta_{il} e_{kj}
\end{eqnarray}
for all $1 \leq i, j \leq 2$, where $\delta$ is the Kronecker
delta and $e_{ij}$ the matrix units. We will compute the twisted
derivations for $\textsf{gl}(2, k)$. The $k$-linear maps $\lambda:
\textsf{gl}(2, k) \to k$ satisfying \equref{lamdaderiv0} are given
as follows:
\begin{eqnarray*}
\lambda(e_{11}) = \lambda(e_{22}) := q \in k, \qquad
\lambda(e_{12}) = \lambda(e_{21}) = 0
\end{eqnarray*}
Now depending on the values of $q$ we obtain, by a rather long but
straightforward computation, the following $k$-linear maps $D:
\textsf{gl}(2, k) \to \textsf{gl}(2, k)$ satisfying
\equref{lambderivari}:

\textbf{Case 1}: Suppose first that $q \notin \{0, \, 1, \, -1, \,
2\}$. Then the space of $k$-linear maps $D: \textsf{gl}(2, k) \to
\textsf{gl}(2, k)$ satisfying \equref{lambderivari} coincide with
the space of all matrices from $\mathcal{M}_{4}(k)$ of the form:
\begin{eqnarray*}
A = \left( \begin{array}{cccc} a_{1} & -(1-q)^{-1}a_{3} & -(1+q)a_{2} & a_{1} \\
a_{2} & q^{-1}(a_{4}-a_{1}) & 0 & (q-1)(q+1)^{-1}a_{2}\\
a_{3} & 0 & q^{-1}(a_{1}-a_{4}) & (q+1)(q-1)^{-1}a_{3}\\
a_{4} & (1-q)^{-1}a_{3} & (1+q)^{-1}a_{2} &
a_{4}\end{array}\right)
\end{eqnarray*}
for all $a_1, \cdots, a_4 \in k$. Thus, in this case any
$5$-dimensional Lie algebra that contains $\textsf{gl}(2, k)$ as a
Lie subalgebra is isomorphic to one of the following five
parameter Lie algebra denoted by $\textsf{gl}(2, k)_{(q, a_1,
\cdots, a_4)} (x)$, which has the basis $\{e_{11}, e_{12}, e_{21},
e_{22}, x\}$ and bracket given by:
\begin{eqnarray*}
[e_{11}, \, x] &=& -a_{1}e_{11} - a_{2}e_{12} - a_{3}e_{21} -
a_{4} e_{22} - qx\\
\left[e_{12}, \, x \right] &=& (1-q)^{-1}a_{3} e_{11} +
q^{-1}(a_{1}-a_{4})e_{12} -
(1-q)^{-1}a_{3} e_{22}\\
\left[e_{21}, \, x\right] &=& (1+q)^{-1}a_{2} e_{11} +
q^{-1}(a_{4}- a_{1}) e_{21} -
(1+q)^{-1}a_{2}e_{22}\\
\left[e_{22}, \, x\right] &=& -a_{1}e_{11} - (q-1)(q+1)^{-1}
a_{2}e_{12} - (q+1)(q-1)^{-1}a_{3}e_{21} - a_{4}e_{22} - qx
\end{eqnarray*}
Two such Lie algebras $\textsf{gl}(2, k)_{(q, a_1, \cdots, a_4)}
(x)$ and $\textsf{gl}(2, k)_{(q, a'_1, \cdots, a'_4)} (x)$ are
equivalent in the sense of \deref{echivextedn} if and only if
there exists $p \in k^{*}$ such that $a_{2} = p a^{'}_{2}$, $a_{3}
= p a^{'}_{3}$ and $a_{1} - a_{4} = p(a^{'}_{1} - a^{'}_{4})$.

\textbf{Case 2}: Assume that $q = 0$. Then the space of $k$-linear
maps $D: \textsf{gl}(2, k) \to \textsf{gl}(2, k)$ satisfying
\equref{lambderivari} coincide with the space of all matrices from
$\mathcal{M}_{4}(k)$ of the form:
\begin{eqnarray*}
A = \left( \begin{array}{cccc} a_{1} & - a_{3} & -a_{2} & a_{1} \\
a_{2} & a_{4} & 0 & -a_{2}\\
a_{3} & 0 & -a_{4} & -a_{3}\\
a_{1} & a_{3} & a_{2} & a_{1}\end{array}\right)
\end{eqnarray*}
for all $a_1, \cdots, a_5 \in k$. Thus, in this case any
$5$-dimensional Lie algebra that contains $\textsf{gl}(2, k)$ as a
Lie subalgebra has the basis $\{e_{11}, e_{12}, e_{21}, e_{22},
x\}$ and bracket given by:
\begin{eqnarray*}
[e_{11}, \, x] = -a_{1}e_{11} - a_{2}e_{12} - a_{3}e_{21} - a_{1}
e_{22}, \quad
\left[e_{12}, \, x \right] = a_{3} e_{11} -a_{4}e_{12} - a_{3}e_{22}\\
\left[e_{21}, \, x\right] = a_{2}e_{11} + a_{4}e_{21} - a_{2}
e_{22}, \quad \left[e_{22}, \, x\right] = -a_{1}e_{11} +
a_{2}e_{12} + a_{3}e_{21} - a_{1} e_{22}
\end{eqnarray*}
Two such Lie algebras are equivalent in the sense of
\deref{echivextedn} if and only if there exists $p \in k^{*}$ such
that $a_{1} = pa^{'}_{1}$.

\textbf{Case 3}: Assume that $q = 1$. Then the space of $k$-linear
maps $D: \textsf{gl}(2, k) \to \textsf{gl}(2, k)$ satisfying
\equref{lambderivari} coincide with the space of all matrices from
$\mathcal{M}_{4}(k)$ of the form:
\begin{eqnarray*}
A = \left( \begin{array}{cccc} a_{1} & a_{3} & -2^{-1}a_{2} & a_{1} \\
a_{2} & a_{4}-a_{1} & 0 & 0\\
0 & 0 & a_{1}-a_{4} & 2a_{3}\\
a_{4} & -a_{3} & 2^{-1}a_{2} & a_{4}\end{array}\right)
\end{eqnarray*}
for all $a_1, \cdots, a_4 \in k$. Thus, in this case any
$5$-dimensional Lie algebra that contains $\textsf{gl}(2, k)$ as a
Lie subalgebra has the basis $\{e_{11}, e_{12}, e_{21}, e_{22},
x\}$ and bracket given by:
\begin{eqnarray*}
[e_{11}, \, x] = -a_{1}e_{11} - a_{2}e_{12} - a_{4}e_{22} - x,
\quad
\left[e_{12}, \, x \right] = -a_{3} e_{11} +(a_{1}-a_{4})e_{12} + a_{3}e_{22}\\
\left[e_{21}, \, x\right] = 2^{-1}a_{2}e_{11} +
(a_{2}-a_{1})e_{21} - 2^{-1}a_{2} e_{22}, \quad \left[e_{22}, \,
x\right] = -a_{1}e_{11} -2a_{3}e_{21} - a_{4}e_{22} - x
\end{eqnarray*}
Two such Lie algebras are equivalent in the sense of
\deref{echivextedn} if and only if there exists $p \in k^{*}$ such
that $a_{2} = pa^{'}_{2}$, $a_{3} = p a^{'}_{3}$ and $a_{1} -
a_{4} = p(a^{'}_{1} - a^{'}_{4})$.

\textbf{Case 4}: Assume that $q = -1$. Then the space of
$k$-linear maps $D: \textsf{gl}(2, k) \to \textsf{gl}(2, k)$
satisfying \equref{lambderivari} coincide with the space of all
matrices from $\mathcal{M}_{4}(k)$ of the form:
\begin{eqnarray*}
A = \left( \begin{array}{cccc} a_{1} & -2^{-1}a_{3} & a_{2} & a_{1} \\
0 & a_{1}-a_{4} & 0 & 2a_{2}\\
a_{3} & 0 & a_{4}-a_{1} & 0\\
a_{4} & 2^{-1}a_{3} & -a_{2} & a_{4}\end{array}\right)
\end{eqnarray*}
for all $a_1, \cdots, a_4 \in k$. Thus, in this case any
$5$-dimensional Lie algebra that contains $\textsf{gl}(2, k)$ as a
Lie subalgebra has the basis $\{e_{11}, e_{12}, e_{21}, e_{22},
x\}$ and bracket given by:
\begin{eqnarray*}
[e_{11}, \, x] = -a_{1}e_{11} - a_{3}e_{21} - a_{4}e_{22} + x,
\quad
\left[e_{12}, \, x \right] = 2^{-1}a_{3} e_{11} +(a_{4}-a_{1})e_{12} - 2^{-1}a_{3} e_{22}\\
\left[e_{21}, \, x\right] = -a_{2}e_{11} + (a_{1}-a_{4})e_{21} +
a_{2} e_{22}, \quad \left[e_{22}, \, x\right] = -a_{1}e_{11}
-2a_{2}e_{12} - a_{4}e_{22} + x
\end{eqnarray*}
Two such Lie algebras are equivalent in the sense of
\deref{echivextedn} if and only if there exists $p \in k^{*}$ such
that $a_{3} = pa^{'}_{3}$, $a_{2} = p a^{'}_{2}$ and $a_{1} -
a_{4} = p(a^{'}_{1} - a^{'}_{4})$.

\textbf{Case 5}: Assume that $q = 2$. Then the space of $k$-linear
maps $D: \textsf{gl}(2, k) \to \textsf{gl}(2, k)$ satisfying
\equref{lambderivari} coincide with the space of all matrices from
$\mathcal{M}_{4}(k)$ of the form:
\begin{eqnarray*}
A = \left( \begin{array}{cccc} a_{1} & a_{3} & -3^{-1}a_{2} & a_{1} \\
a_{2} & 2^{-1}(a_{4}-a_{1}) & a_{5} & 3^{-1}a_{2}\\
a_{3} & 0 & 2^{-1}(a_{1}-a_{4}) & 3a_{3}\\
a_{4} & -a_{3} & 3^{-1}a_{2} & a_{4}\end{array}\right)
\end{eqnarray*}
for all $a_1, \cdots, a_5 \in k$. Thus, in this case any
$5$-dimensional Lie algebra that contains $\textsf{gl}(2, k)$ as a
Lie subalgebra has the basis $\{e_{11}, e_{12}, e_{21}, e_{22},
x\}$ and bracket given by:
\begin{eqnarray*}
[e_{11}, \, x] &=& -a_{1}e_{11} - a_{2}e_{12} - a_{3}e_{21}-
a_{4}e_{22} - 2x\\
\left[e_{12}, \, x \right] &=& -a_{3} e_{11} + 2^{-1}(a_{1}-a_{4})e_{12} + a_{3}e_{22}\\
\left[e_{21}, \, x\right] &=& 3^{-1}a_{2}e_{11} - a_{5}e_{12} +
2^{-1}(a_{4}-a_{1})e_{21} -
3^{-1}a_{2} e_{22}\\
\left[e_{22}, \, x\right] &=& -a_{1}e_{11} - 3^{-1}a_{2}e_{12} -
3a_{3}e_{21}- a_{4}e_{22} - 2x
\end{eqnarray*}
Two such Lie algebras are equivalent in the sense of
\deref{echivextedn} if and only if there exists $p \in k^{*}$ such
that $a_{2} = pa^{'}_{2}$, $a_{3} = pa^{'}_{3}$, $a_{5} = p
a^{'}_{5}$ and $a_{1} - a_{4} = p(a^{'}_{1} - a^{'}_{4})$.

Thus we have described the classifying object ${\rm Extd} \, (k^5,
\, \textsf{gl}(2, k)) \cong {\mathcal H}^{2}_{\textsf{gl}(2, k)}
\, (V, \, \textsf{gl}(2, k))$: it is equal to the disjoint union
of the five quotient spaces described above. This is easy to see
having in mind that if two twisted derivations $(\lambda_1, D_1)$
are equivalent in the sense of \coref{clasdim1perf} then
$\lambda_1 = \lambda_2$.
\end{example}

\end{document}